\newtheorem{proposition}{Proposition}[section]
\newtheorem{theorem}[proposition]{Theorem}
\newtheorem{lemma}[proposition]{Lemma}
\newtheorem{example}[proposition]{Example}
\newtheorem{algorithm}[proposition]{Algorithm}
\newtheorem{assumption}[proposition]{Assumption}
\newenvironment{proof}{{\noindent \bf Proof:}}{\hfill $\fbox{}$ \vspace*{5mm}}
\newcommand{\ba}{{\bf a}}
\newcommand{\bx}{{\bf x}}
\newcommand{\la}{\lambda}
\newcommand{\La}{\Lambda}
\newcommand{\ca}{\mathcal{A}}
\newcommand{\cj}{\mathcal{J}}
\newcommand{\cl}{\mathcal{L}}
\newcommand{\cm}{\mathcal{M}}
\newcommand{\cn}{\mathcal{N}}
\newcommand{\co}{\mathcal{O}}
\newcommand{\cp}{\mathcal{P}}
\newcommand{\cq}{\mathcal{Q}}
\newcommand{\cs}{\mathcal{S}}
\newcommand{\cv}{\mathcal{V}}
\newcommand{\ct}{\mathcal{T}}
\newcommand{\cx}{\mathcal{X}}
\newcommand{\cy}{\mathcal{Y}}
\newcommand{\cz}{\mathcal{Z}}
\newcommand{\Diag}{{\rm Diag}}
\newcommand{\grad}{{\rm grad\;}}
\newcommand{\im}{{\rm im}}
\newcommand{\ve}{{\rm vec}}
\newcommand{\tr}{{\rm tr}}
\newcommand{\R}{{\mathbb R}}
\newcommand{\Rn}{{\mathbb R}^n}
\newcommand{\Cnn}{{\mathbb C}^{n\times n}}
\newcommand{\Rnn}{{\mathbb R}^{n\times n}}
\newcommand{\SRn}{\mathbb{SR}^{n\times n}}
\newcommand{\BE}{\begin{equation}}
\newcommand{\EE}{\end{equation}}
\newcommand{\normmm}[1]{{\vert\kern-0.25ex \vert\kern-0.25ex \vert #1
    \vert\kern-0.25ex \vert\kern-0.25ex\vert}}
\begin{document}

\title{A Riemannian Inexact Newton-CG Method for Nonnegative Inverse Eigenvalue Problems: Nonsymmetric Case}
\author{Zhi Zhao\thanks{Department of Mathematics, School of Sciences, Hangzhou Dianzi University,
Hangzhou 310018, People's Republic of China (zzhao@hdu.edu.cn).
The research of this author is supported by the National Natural Science Foundation of China (No. 11601112).}
\and Zheng-Jian Bai\thanks{Corresponding author. School of Mathematical Sciences and Fujian Provincial Key Laboratory on Mathematical Modeling \& High Performance Scientific Computing,  Xiamen University, Xiamen 361005, People's Republic of China (zjbai@xmu.edu.cn). The research of this author is partially supported by the National Natural Science Foundation of China (No. 11671337), the Natural Science Foundation of Fujian Province of China (No. 2016J01035), and the Fundamental Research Funds for the Central Universities (No. 20720150001).}
\and Xiao-Qing Jin\thanks{Department of Mathematics, University of Macau, Macao, People's Republic of China (xqjin@umac.mo). The research of this author is supported by the research grant MYRG2016-00077-FST from University of Macau.} }

\maketitle
\begin{abstract}
This paper is concerned with the nonnegative inverse eigenvalue problem of finding a nonnegative matrix such that its  spectrum is the prescribed self-conjugate set of complex numbers. We first reformulate the nonnegative inverse eigenvalue problem as an under-determined constrained nonlinear matrix equation over several matrix manifolds. Then we propose a Riemannian inexact Newton-CG method for solving the nonlinear matrix equation. The global and quadratic convergence of the proposed method is established under some mild conditions. We also extend the proposed method to the case of prescribed entries. Finally, numerical experiments are reported to illustrate the efficiency of the proposed method.

\end{abstract}

\vspace{3mm}

{\bf Keywords.} inverse eigenvalue problem, nonnegative matrix, Riemannian manifolds, Riemannian inexact Newton method, Riemannian nonlinear conjugate gradient method

\vspace{3mm}
{\bf AMS subject classifications.} 65F18,  65F15, 15A18, 58C15

\section{Introduction}
An $n$-by-$n$ nonnegative matrix $C$ is a real matrix whose entries are all greater than or equal to zero, i.e., $(C)_{ij}\ge 0$ for all $i,j=1,\ldots,n$, where $(C)_{ij}$ denotes the $(i,j)$th entry of $C$. Nonnegative matrices arise in various applications such as the Markov chain, linear complementary problems, probabilistic algorithms, discrete distributions, categorical data, group theory, matrix scaling,
and economics. See for instance \cite{BR97,BP79,M88,SA06} and the references therein.

In this paper, we consider the following nonnegative inverse eigenvalue problem (NIEP):

{\bf NIEP.} {\em Given a self-conjugate set of complex numbers $\{\lambda_1, \lambda_2, \ldots , \lambda_n\}$,
find an $n$-by-$n$ nonnegative matrix $C$ such that its eigenvalues are
$\lambda_1, \lambda_2, \ldots , \lambda_n$.}

The early works on the NIEP are due to Sule\textsc{\u{i}}manova \cite{S49}, Karpelevi\u{c} \cite{K51}, and Perfect \cite{P53,P55}.
There has been much literature on the study of the NIEP since then. On the solvability conditions of the NIEP, one may refer to \cite{BJ84,BH91,ELN04,FM79,LS06,LL78,O83,R96,S03,S06-2,S13}. For more comprehensive discussions on the NIEP, one may refer to \cite{CG02,CG05,M88,X98} and the references therein. There are a few numerical methods for solving the NIEP such as
the constructive method \cite{S83}, the alternating projection method \cite{O06}, isospectral gradient flow methods \cite{CL11,CDS04,CD91,CG98}, and a fast recursive algorithm \cite{L13} for the case where the prescribed eigenvalues are all real and satisfy  an additional inequality.

Recently, there exists some literature on Riemannian optimization methods for eigenvalue problems and inverse eigenvalue problems. See for instance \cite{AMS08,B08,S94,YBZC16,ZBJ14,ZJB16}. In this paper,  we propose a Riemannian inexact Newton-CG method for solving the NIEP. This is motivated by the recent two papers due to Dedieu, Priouret, and Malajovich \cite{DPM03} and Simons \cite{S06}. In \cite{DPM03}, based on the exponential map, Dedieu  et. al. presented Newton's method  for finding zeros of a mapping from  a Riemannian manifold  to  a linear space of the same dimension and the quadratic convergence was also investigated. In \cite{S06}, Simons gave some inexact Newton methods for solving an under-determined system of nonlinear equations over vector spaces.  By using the real Schur decomposition of a real square matrix,  we  rewrite the NIEP as an equivalent under-determined constrained nonlinear matrix equation over several matrix manifolds.   Then we present  a Riemannian inexact Newton-CG method for solving the under-determined constrained nonlinear matrix equation.
Under some mild conditions, the global and quadratic convergence property of the proposed method is established.
We also extend the proposed method to the case of prescribed entries. Numerical experiments show that the proposed method is more efficient
than the alternating projection method in \cite{O06} and the Riemannian nonlinear conjugate gradient methods in \cite{YBZC16,ZJB16}.

Throughout this paper, we use the following notations. The symbols $A^T$ and $A^H$ denote the transpose and  complex conjugate transpose of a matrix $A$ respectively.
$I_n$ is the identity matrix of order $n$. Let $\Rnn$ and $\SRn$ be the set of all $n$-by-$n$  real matrices and the set of all $n$-by-$n$ real symmetric matrices, respectively. Let $\Rnn_{+}$ and $\SRn_+$ denote the nonnegative orthants of $\Rnn$ and $\SRn$, respectively. For two matrices $A,B\in \Rnn$, $A \odot B$ and $[A,B]:=AB-BA$ mean  the Hadamard product and Lie Bracket of $A$ and $B$, respectively. Given a vector $\ba\in\Rn$, $\Diag(\ba)$ denotes a diagonal matrix with $\ba$ on its diagonal. Let $\ve(A)$ be the vectorization of a matrix $A$, i.e., a column vector obtained by stacking the columns of $A$ on top of one another. Denote by $\tr(A)$ the sum of the diagonal entries of a square matrix $A$. Define the index set $\cn:=\{(i,j) \ | \ i,j=1,\ldots,n\}$.
For two finite-dimensional vector spaces $\cx$ and $\cy$ equipped with a scalar inner product $\langle\cdot,\cdot\rangle$ and its induced norm $\|\cdot\|$, let $\ca:\cx\to \cy$ be a linear operator. The adjoint operator of $\ca$ is denoted by $\ca^*$.
The operator norm of $\ca$ is defined by $\normmm{\ca}:=\sup\{ \| \ca x\| \ | \ x\in\cx\mbox{ with } \|x\|=1\}.$

The rest of this paper is organized as follows. In section \ref{sec3} we propose a Riemannian inexact Newton-CG method for solving the NIEP.
In section \ref{sec4} the global and quadratic convergence of the proposed method is established under some mild conditions.
In section \ref{sec5}, we discuss  some extensions. Finally, some numerical tests are reported in section \ref{sec6}
and we give some concluding remarks in section \ref{sec7}.
\section{Riemannian inexact Newton-CG method}\label{sec3}
In this section, we first reformulate the NIEP as a nonlinear matrix equation defined on a Riemannian product manifold. Then we propose a Riemannian  inexact Newton-CG method for solving the nonlinear matrix equation.
\subsection{Reformulation}
For the two matrix sets $\Rnn_{+}$ and $\Rnn$, we have
\[
\Rnn_{+} = \big\{S\odot S \ | \  S \in \Rnn \big\}.
\]
Notice that the set of prescribed eigenvalues $\{\lambda_1, \lambda_2, \ldots , \lambda_n\}$ is closed under complex conjugation. Without loss of generality, we can assume
\[
\la_{2i-1}=a_i+ b_i\sqrt{-1}, \quad \la_{2i}=a_i-b_i\sqrt{-1}, \quad i=1,\ldots,s; \quad \la_i\in\R, \quad i=2s+1,\ldots,n.
\]
where $a_i, b_i\in\R$ with $b_i\neq 0$ for $i=1,\ldots,s$. Define the following block diagonal matrix
\[
\La:={\rm blkdiag}\left(\la_1^{[2]}, \ldots,\la_s^{[2]},\la_{2s+1},\ldots,\la_n\right),
\]
where
\[
\lambda_i^{[2]} :=
\left[
\begin{array}{cc}
a_i & b_i \\
-b_i & a_i
\end{array}
\right],\quad i=1,\ldots,s.
\]
By using the real Schur decomposition for a real square matrix \cite{GV13}, the set of all isospectral matrices can be defined
as the following matrix set:
\[
\cm(\Lambda): =  \big\{ X \in \Rnn \ | \ X = Q(\Lambda + V) Q^T ,\;  Q\in \co(n) , V\in \cv \big\}.
\]
Here,  $\co(n)$ means the set of all $n$-by-$n$ orthogonal matrices, i.e.,
\[
\co(n): =  \big\{ Q \in \Rnn \ | \ Q^TQ = I_{n} \big\}
\]
and the set $\cv$ is defined by
\[
\cv : =  \big\{ V \in \Rnn  \ | \  V_{ij}= 0, \; (i,j) \in \mathcal{I} \big\},
\]
where
\[
\mathcal{I}: = \big\{ (i,j) \ | \ i\geq j \; \mbox{or} \; \Lambda_{ij} \neq 0, \; i,j =1,\ldots,n \big\}\subset\cn.
\]
Thus the NIEP has a solution if and only if $\cm(\Lambda) \cap \Rnn_{+} \neq \emptyset$.

Suppose that the NIEP has at least one solution. Then the NIEP aims to solve
the following constrained nonlinear matrix equation:
\BE\label{eq:NIEP1}
G(S,Q,V)= \mathbf{0}_{n\times n}
\EE
for $(S,Q,V)\in\Rnn \times \co(n) \times \cv$, where $\mathbf{0}_{n\times n}$ is the zero matrix of order $n$.
The smooth mapping $G:\Rnn \times \co(n) \times \cv \to\Rnn$ is defined by
\[
G(S,Q,V) : = S \odot S - Q(\Lambda + V)Q^T,\quad (S,Q,V)\in\Rnn \times \co(n) \times \cv.
\]

We point out that $G$ is a smooth mapping from the product manifold $\Rnn\times \co(n)\times \cv$ to the linear space $\Rnn$. Once we find a solution $(\overline{S},\overline{Q},\overline{V})\in\Rnn \times \co(n) \times \cv$ to the  nonlinear equation (\ref{eq:NIEP1}), then the matrix $\overline{C}:=\overline{S}\odot \overline{S}$ is a solution to the NIEP.

\subsection{Riemannian inexact Newton-CG method}

In \cite{S06}, Simons presented some inexact Newton methods for the  under-determined system of nonlinear equations $F(\bx)={\bf 0}_n$, where $F:\R^m\to\Rn$ is continuously differentiable ($m>n$) and $\mathbf{0}_{n}$ is an $n$-vector of all zeros. Sparked by this, in this section, we propose a Riemannian  inexact Newton-CG method for solving the  nonlinear equation (\ref{eq:NIEP1}).

We first note that $\Rnn\times \co(n)\times \cv$ is a product manifold and as shown in Appendix A, the nonlinear matrix equation (\ref{eq:NIEP1}) is under-determined for all $n\ge 2$. It is easy to see  that  $\Rnn \times \co(n) \times \cv$ is an embedded submanifold of $\Rnn \times \Rnn \times \Rnn$ and then every  tangent space $T_{(S,Q,V)}( \Rnn \times\co(n)\times \cv)$, which is characterized as in Appendix A, can be regarded as  a subspace of $T_{(S,Q,V)}( \Rnn \times\Rnn\times \Rnn)\simeq \Rnn \times\Rnn\times \Rnn$, where ```$\simeq$" means the identification of two sets. Hence, the Riemannian metric of $\Rnn \times\co(n)\times \cv$ inherited from the standard inner product on $\Rnn \times \Rnn \times \Rnn$ is given by
\begin{eqnarray}\label{def:ip}
g_{(S,Q,V)}\big( (\xi_1, \zeta_1, \eta_1), (\xi_2, \zeta_2,\eta_2) \big)&:=&\langle(\xi_1, \zeta_1, \eta_1), (\xi_2, \zeta_2,\eta_2)\rangle \nonumber\\ &:=&\tr(X_1^TX_2)+\tr(Y_1^TY_2)+\tr(Z_1^TZ_2),
\end{eqnarray}
for all $(S,Q,V) \in \Rnn \times \co(n) \times \cv$ and $(\xi_1, \zeta_1,\eta_1), (\xi_2, \zeta_2,\eta_2) \in
T_{(S,Q,V)}\big(\Rnn \times \co(n) \times \cv\big)$. In what follows, we denote by $\langle\cdot,\cdot\rangle$ and $\|\cdot\|$  the Riemannian metric and its induced norm on  $\Rnn \times \co(n) \times \cv$ respectively.

Next, we propose  a Riemannian  inexact Newton-CG method for solving the under-determined matrix equation (\ref{eq:NIEP1}).
As in \cite{DPM03}, one may propose the following geometric Newton method: Given the current iterate $X^k := (S^k,Q^k,V^k) \in \Rnn \times \co(n) \times \cv$, solve the Newton equation:
\BE\label{eq:NNEW1}
\mathrm{D}G(X^k)[\Delta X^k ] = -G(X^k)
\EE
for $\Delta X^k:=(\Delta S^k,\Delta Q^k, \Delta V^k)\in T_{X^k}( \Rnn \times\co(n)\times \cv)$ and set
\[
X^{k+1}:= R_{X^k} (\Delta X^k),
\]
where $\mathrm{D}G(X^k)$ is the differential of $G$ at $X^k$ and $R$ is a retraction on $\Rnn\times\co(n)\times \cv$. On the explicit expressions of $\mathrm{D}G(\cdot)$ and $R$, one may refer to Appendix A.

We see that (\ref{eq:NNEW1}) is under-determined, which may have many solutions.
Sparked by the idea in \cite{S06,WW90}, the minimum norm  solution of (\ref{eq:NNEW1}) is given by:
\[
\Delta X^k=(\mathrm{D}G(X^k))^\dag G(X^k),
\]
where $(\mathrm{D}G(X^k))^{\dag}$ means the pseudoinverse of $\mathrm{D}G(X^k)$ \cite{L69}.
In particular, if the linear operator $\mathrm{D}G(X^k)$ is surjective,
then we have \cite[Chap. 6]{L69}:
\[
(\mathrm{D}G(X^k))^{\dag} = (\mathrm{D}G(X^k))^*\circ \big(\mathrm{D}G(X^k)\circ(\mathrm{D}G(X^k))^*\big)^{-1}.
\]
In this case, one may solve the following normal equation:
\BE\label{eq:NPNEW0}
\mathrm{D}G(X^k)\circ (\mathrm{D}G(X^k))^*[ \Delta Z ] = -G(X^k),\quad \text{s.t.} \quad \Delta Z^k \in T_{G(X^k)} \Rnn
\EE
for the minimum norm solution $\Delta X^k = (\mathrm{D}G(X^k))^* [ \Delta Z^k]\in T_{X^k}(\Rnn \times\co(n)\times \mathcal{V})$, where $\mathrm{D}G(X^k)^*$ is  the adjoint of
$\mathrm{D}G(X^k)$ with respect to the Riemannian metric $\langle\cdot,\cdot\rangle$ on  $\Rnn \times \co(n) \times \cv$. For the explicit expression of $\mathrm{D}G(\cdot)^*$, one may refer to Appendix A. Thus, the conjugate gradient (CG) method \cite{GV13} can be used to solve the self-adjoint and positive definite equation (\ref{eq:NPNEW0}).

We note that $\mathrm{D}G(X^k)\circ (\mathrm{D}G(X^k))^*$  may be ill-conditioned or singular.
Instead of (\ref{eq:NPNEW0}), one may solve the following perturbed normal equation:
\[
\Big(\mathrm{D}G(X^k)\circ (\mathrm{D}G(X^k))^* + \overline{\sigma}\mathrm{id}_{T_{G(X^k)} \Rnn}\Big)[ \Delta Z^k ] = -G(X^k)
\]
for $\Delta Z^k \in T_{G(X^k)} \Rnn\simeq\Rnn$, where $\overline{\sigma} >0$ is a prescribed constant and $\mathrm{id}_{T_{G(X^k)}}$ denotes the identity operator on $T_{G(X^k)}\Rnn$.

Based on the above discussion, we propose the following Riemannian inexact Newton-CG algorithm for solving  (\ref{eq:NIEP1}).

\begin{algorithm} \label{nm1}
 {\rm (\bf{Riemannian inexact Newton-CG method})}
\begin{description}
\item [{Step 0.}] Choose an initial point $X^0 \in \Rnn \times \co(n) \times \cv$, $\overline{\sigma}_{\max}, \overline{\eta}_{\max}, \widehat{\eta}_{\max} \in [0,1)$, $t\in (0,1)$, $0<\theta_{\min}<\theta_{\max}<1$. Let $k:=0$.

\item [{Step 1.}] Apply the CG method to solving
    \BE\label{eq:le}
    \big(\mathrm{D}G(X^k)\circ (\mathrm{D}G(X^k))^* +\overline{\sigma}_k \mathrm{id}_{T_{G(X^k)} \Rnn} \big)[\Delta Z^k] = - G(X^k),
    \EE
    for $\Delta Z^k \in T_{G(X^k)} \Rnn$ such that
    \BE\label{eq:tol1}
    \|\big( \mathrm{D}G(X^k) \circ (\mathrm{D}G(X^k))^* +\overline{\sigma}_k \mathrm{id}_{T_{G(X^k)} \Rnn}\big)[ \Delta Z^k ]
    + G(X^k)\big\|_{F} \le  \overline{\eta}_k\|  G(X^k) \|_{F} ,
    \EE
    and
    \BE\label{eq:tol2}
    \|\mathrm{D}G(X^k) \circ (\mathrm{D}G(X^k))^* [ \Delta Z^k ] + G(X^k)\|_{F} \leq  \widehat{\eta}_{\max}\|  G(X^k) \|_{F} ,
    \EE
    where $\overline{\sigma}_k:=\min\{\overline{\sigma}_{\max},\| G(X^k)\|_{F}\}$, $\overline{\eta}_k:=\min\{\overline{\eta}_{\max},\| G(X^k)\|_{F}\}$. Then let
    \BE\label{eq:direstep}
    \widehat{\Delta X}^k= (\mathrm{D}G(X^k))^*[ \Delta Z^k ],\quad
    \widehat{\eta}_k:= \frac{\|\mathrm{D}G(X^k) [ \widehat{\Delta X}^k ] + G(X^k)\|_{F}}{\| G(X^k)\|_{F}}.
    \EE

\item [{Step 2.}] Evaluate $G\big(R_{X}^k(\widehat{\Delta X}^k)\big)$.
                  Set $\eta_k = \widehat{\eta}_k$ and $\Delta X^k = \widehat{\Delta X}^k$.

                 {\bf Repeat} until $\|G\big(R_{X^k}(\Delta X^k)\big)\|_F\le (1-t(1-\eta_k)) \|G(X^k)\|_F$.

                  \qquad\quad Choose $\theta \in [\theta_{\min},\theta_{\max}]$.

                  \qquad\quad Replace  $\Delta X^k$ by $\theta \Delta X^k $ and $\eta_k$ by $1-\theta(1-\eta_k)$.

                  {\bf end (Repeat)}

                  Set
                  \[
                  X^{k+1} := R_{X^k} (\Delta X^k ).
                  \]
\item [{Step 3.}] Replace $k$ by $k+1$ and go to {\bf Step 1}.
\end{description}
\end{algorithm}

We note that, in \cite{DPM03}, the new iterate for Newton's method is updated by using the exponential map while in Algorithm \ref{nm1}, the new iterate $X^{k+1}$ is updated by using a retraction on $\Rnn \times \co(n) \times \cv$ defined as in Appendix A instead of the exponential map on $\Rnn \times \co(n) \times \cv$, which is in general computationally costly \cite[p.59 and p.103]{AMS08}. In addition, in {\bf Step 2} of Algorithm \ref{nm1}, one needs to choose a scaling factor $\theta \in [\theta_{\min},\theta_{\max}]$. As in \cite{S06}, one may choose  $\theta$ by employing the quadratic backtracking method (see also \cite{DS96}).
Let
\[
T(\Rnn \times \co(n) \times \cv)=\cup_{X\in\Rnn\times \co(n)\times\cv}T_X(\Rnn \times \co(n) \times \cv)
\]
be the tangent bundle of $\Rnn \times \co(n) \times \cv$ \cite[p.36]{AMS08}. The pullback $\widehat{G}$ of $G$ is
a smooth mapping from $T(\Rnn \times \co(n) \times \cv)$ to $\Rnn$ defined by
\BE \label{def:pb}
\widehat{G} (\xi):= G(R(\xi)), \quad \forall \xi\in T(\Rnn \times \co(n) \times \cv).
\EE
The restriction of $\widehat{G}$ on $T_X(\Rnn \times \co(n) \times \cv)$ for $X\in \Rnn \times \co(n) \times \cv$ is defined by
\[
\widehat{G}_X(\xi_X)= G(R_X(\xi_X)), \quad \forall \xi_X \in T_X(\Rnn \times \co(n) \times \cv).
\]
Then one has
\BE\label{s1:e2}
\mathrm{D}G(X) = \mathrm{D}\widehat{G}_X(0_X),\quad \forall X\in \Rnn \times \co(n) \times \cv,
\EE
where $0_X$ is the origin of $T_X(\Rnn \times \co(n) \times \cv)$.
We now find an approximate minimizer of the cost function
\[
u(\theta) : = \|G(R_{X^k}(\theta\Delta X^k))\|_F^2 = \|\widehat{G}_{X^k}(\theta\Delta X^k)\|_F^2.
\]
Define a quadratic polynomial by
\[
q(\theta) := (u(1) - u(0) - u'(0))\theta^2 + u'(0)\theta + u(0),
\]
where
\[
\begin{array}{l}
u(0) = \|\widehat{G}_{X^k}(0_{X^k})\|_F^2=\|G(X^k)\|_F^2, \qquad
u(1) = \|\widehat{G}_{X^k}(\Delta X^k)\|_F^2=\|G(R_{X^k}(\Delta X^k))\|_F^2 , \\[2mm]
u'(0) = 2\langle {\rm D}\widehat{G}_{X^k}(0_{X^k})[\Delta X^k], \widehat{G}_{X^k}(0_{X^k}) \rangle =2\langle {\rm D}G(X^k)[\Delta X^k], G(X^k) \rangle.
\end{array}
\]
Obviously, the values of $u(0)$ and $u(1)$ have been evaluated in  Algorithm \ref{nm1} and it is not so complicated to compute $u'(0)$.
It is easy to check that
\[
q'(\theta) = 2(u(1) - u(0) - u'(0))\theta + u'(0)\quad\mbox{and}\quad
q''(\theta) = 2(u(1) - u(0) - u'(0)).
\]
If $q''(\theta) \leq 0$, then the quadratic polynomial $q$ is concave and we choose $\theta = \theta_{\max}$.
If $q''(\theta) > 0$, then the minimizer of $q$ is reached at the point $\theta$ satisfying $q'(\theta) = 0$, i.e.,
\[
\theta = \frac{-u'(0)}{2(u(1) - u(0) - u'(0))}.
\]
Since we require $\theta \in [\theta_{\min},\theta_{\max}]$,
the approximate minimizer $\theta$ of $u$ is given by
\[
\theta = \min\left\{ \max\Big\{\theta_{\min},\frac{-u'(0)}{2(u(1) - u(0) - u'(0))}\Big\}, \theta_{\max} \right\}.
\]

\section{Convergence analysis}\label{sec4}

In this section, we establish the global and quadratic convergence of Algorithm \ref{nm1}.
Notice that $\Rnn$ and $\cv$ are two linear matrix manifolds and $\co(n)$ is a compact manifold. For the retraction $R$ on $\Rnn \times \co(n) \times \cv$ defined as in Appendix A, there exist two scalars $\nu >0$ and $\mu_{\nu} >0$ such that \cite[p.149]{AMS08},
\BE\label{retr:bd-2}
\nu\| \Delta X \| \geq  {\rm dist}\big(X,R_{X} (\Delta X) \big),
\EE
for all $X:=(S,Q,V)\in\Rnn \times \co(n) \times \cv$ and
\[
\Delta X:=(\Delta S, \Delta Q,\Delta V)\in T_{X}(\Rnn \times \co(n) \times \cv)
\]
with $\| \Delta X\|\leq \mu_{\nu}$, where ``{\rm dist}" means the Riemannian distance on $\Rnn \times \co(n) \times \cv$.

\subsection{Global convergence}
To prove the global convergence of Algorithm \ref{nm1}, we need some preliminary lemmas. On  the iterate $\widehat{\Delta X}^k$ generated by Algorithm \ref{nm1}, we have the following estimate.

\begin{lemma}\label{lemma:direnorm}
Assume that $\mathrm{D}G(X^k):T_{X^k}(\Rnn \times\co(n)\times \mathcal{V})\to T_{G(X^k)}\Rnn$ is surjective for all $k$. If the linear matrix equation {\rm (\ref{eq:le})} is solvable  such that conditions {\rm (\ref{eq:tol1})} and {\rm (\ref{eq:tol2})}  are satisfied for all $k$, then one has for all $k$,
\[
\| \widehat{\Delta X}^k \| \leq (1+ \overline{\eta}_{k})\normmm{( \mathrm{D}G(X^k))^\dag} \cdot \|G(X^k)\|_F.
\]
\end{lemma}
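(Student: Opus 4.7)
The plan is to derive an explicit formula for $\widehat{\Delta X}^k$ from the inexact normal equation (\ref{eq:le}) and then to bound its norm using a Tikhonov-type spectral estimate of the form $\normmm{\ca^\ast(\ca\ca^\ast + \sigma \, \mathrm{id})^{-1}} \leq \normmm{\ca^\dag}$, where $\ca := \mathrm{D}G(X^k)$.

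First I would introduce the abbreviation $\ca := \mathrm{D}G(X^k)$ and rewrite condition (\ref{eq:tol1}) as
\[
(\ca\circ\ca^\ast + \overline{\sigma}_k\,\mathrm{id}_{T_{G(X^k)}\Rnn})[\Delta Z^k] \;=\; -G(X^k) + r^k,
\]
where $r^k \in T_{G(X^k)}\Rnn$ is the inexactness residual and satisfies $\|r^k\|_F \leq \overline{\eta}_k\|G(X^k)\|_F$. The surjectivity hypothesis ensures that $\ca\circ\ca^\ast$ is self-adjoint positive definite on $T_{G(X^k)}\Rnn$, so the shifted operator on the left is invertible. Solving for $\Delta Z^k$ and substituting into the definition $\widehat{\Delta X}^k = \ca^\ast[\Delta Z^k]$ from (\ref{eq:direstep}) yields the closed form
\[
\widehat{\Delta X}^k \;=\; \ca^\ast \bigl(\ca\circ\ca^\ast + \overline{\sigma}_k\,\mathrm{id}\bigr)^{-1}\bigl(-G(X^k) + r^k\bigr).
\]

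The main step is the operator-norm inequality $\normmm{\ca^\ast(\ca\ca^\ast + \overline{\sigma}_k\,\mathrm{id})^{-1}} \leq \normmm{\ca^\dag}$. I would prove this via a singular value decomposition of $\ca$: if $\ca$ has positive singular values $\sigma_1 \geq \cdots \geq \sigma_r > 0$ (with $r$ the dimension of the codomain $T_{G(X^k)}\Rnn$, guaranteed by surjectivity), then the composite operator $\ca^\ast(\ca\ca^\ast + \overline{\sigma}_k\,\mathrm{id})^{-1}$ acts with singular values $\sigma_i/(\sigma_i^2 + \overline{\sigma}_k)$, each bounded above by $1/\sigma_i$, so the operator norm is at most $1/\sigma_r = \normmm{\ca^\dag}$. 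Taking norms in the closed form for $\widehat{\Delta X}^k$ and using the triangle inequality together with the residual bound on $r^k$ then gives
\[
\|\widehat{\Delta X}^k\| \;\leq\; \normmm{\ca^\dag}\bigl(\|G(X^k)\|_F + \|r^k\|_F\bigr) \;\leq\; (1+\overline{\eta}_k)\normmm{(\mathrm{D}G(X^k))^\dag}\,\|G(X^k)\|_F,
\]
which is the desired estimate.

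The only conceptual obstacle is the spectral bound comparing the Tikhonov-regularized pseudoinverse with the true pseudoinverse; everything else is direct substitution. It is worth noting that the tolerance condition (\ref{eq:tol2}) and the quantity $\widehat{\eta}_k$ introduced in (\ref{eq:direstep}) play no role in this lemma, consistent with the fact that the right-hand side of the claimed bound only involves $\overline{\eta}_k$.
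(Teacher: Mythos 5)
Your proof is correct and follows essentially the same route as the paper: solve the perturbed normal equation for $\Delta Z^k$, substitute into $\widehat{\Delta X}^k = (\mathrm{D}G(X^k))^*[\Delta Z^k]$, and bound the resulting Tikhonov-type operator $\ca^*(\ca\ca^* + \overline{\sigma}_k\,\mathrm{id})^{-1}$ by the pseudoinverse norm $\normmm{\ca^\dag}$. The only difference is that you make explicit, via the singular value estimate $\sigma_i/(\sigma_i^2+\overline{\sigma}_k)\le 1/\sigma_i$, the monotonicity step that the paper asserts without comment; your side observation that (\ref{eq:tol2}) and $\widehat{\eta}_k$ are not used in this lemma is also accurate.
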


\begin{proof}
Let
\[
\mathrm{id}:=\mathrm{id}_{T_{G(X^k)}}, \; J(X^k):=\mathrm{D}G(X^k) \circ (\mathrm{D}G(X^k))^* +\overline{\sigma}_k \mathrm{id},
\; V(X^k):=G(X^k) +J(X^k)[ \Delta Z^k ].
\]
We get by (\ref{eq:tol1}),
\BE\label{ieq:vg}
\| V(X^k) \|_F  \leq \overline{\eta}_k \| G(X^k) \|_F.
\EE
By the assumption that $\mathrm{D}G(X^k)$ is surjective for all $k$, we have by (\ref{eq:tol1}) and (\ref{ieq:vg}),
\[
\begin{array}{rcl}
& &\| \widehat{\Delta X}^k \| = \|(\mathrm{D}G(X^k))^*[ \Delta Z^k ]\| \\[2mm]
&\leq& \normmm{\mathrm{D}G(X^k))^* \circ \big(\mathrm{D}J(X^k)\big)^{-1}} \cdot \|J(X^k)[ \Delta Z^k ] \|_F \\[2mm]
&=& \normmm{(\mathrm{D}G(X^k))^* \circ \big(J(X^k)\big)^{-1}}\cdot \| V(X^k) - G(X^k) \|_F \\[2mm]
&\leq& \normmm{(\mathrm{D}G(X^k))^* \circ \big(J(X^k)\big)^{-1}}\cdot \big(\|V(X^k)\|_F + \| G(X^k) \|_F \big) \\[2mm]
&\le& (1+\overline{\eta}_{k})  \normmm{(\mathrm{D}G(X^k))^* \circ \big(J(X^k)\big)^{-1}} \cdot \| G(X^k)\|_{F} \\[2mm]
&\leq& (1+\overline{\eta}_{k}) \normmm{(\mathrm{D}G(X^k))^* \circ \big( \mathrm{D}G(X^k) \circ (\mathrm{D}G(X^k))^* \big)^{-1}} \cdot \| G(X^k)\|_{F}\\[2mm]
&=& (1+\overline{\eta}_{k}) \normmm{(\mathrm{D}G(X^k))^\dag} \cdot \| G(X^k) \|_F.
\end{array}
\]
\end{proof}

On the upper bound of the iterate $\widehat{\eta}_k$ generated by Algorithm \ref{nm1}, we have the following result.
\begin{lemma}\label{lemma:residual}
Assume that $\mathrm{D}G(X^k):T_{X^k}(\Rnn \times\co(n)\times \mathcal{V})\to T_{G(X^k)}\Rnn$ is surjective for all $k$.
If the linear matrix equation {\rm (\ref{eq:le})} is solvable  such that conditions {\rm (\ref{eq:tol1})}
and {\rm (\ref{eq:tol2})}  are satisfied for all $k$, then one has for all $k$,
\BE\label{eq:resnorm}
\widehat{\eta}_k \leq \min\left\{\frac{\overline{\sigma}_k}{\lambda_{\min}\big(\mathrm{D}G(X^k) \circ (\mathrm{D}G(X^k))^*\big)+\overline{\sigma}_k} +  \overline{\eta}_k, \widehat{\eta}_{\max} \right\},
\EE
where $\la_{\min}(\cdot)$ means the smallest eigenvalue of a positive definite linear operator.
\end{lemma}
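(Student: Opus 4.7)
My plan is to handle the two arguments of the $\min$ separately. The bound $\widehat{\eta}_k \leq \widehat{\eta}_{\max}$ is essentially the content of the inner-iteration stopping criterion, so I would read it straight off the definition of $\widehat{\eta}_k$ in (\ref{eq:direstep}): since $\widehat{\Delta X}^k = (\mathrm{D}G(X^k))^*[\Delta Z^k]$, we have $\mathrm{D}G(X^k)[\widehat{\Delta X}^k] = \mathrm{D}G(X^k)\circ(\mathrm{D}G(X^k))^*[\Delta Z^k]$, so dividing (\ref{eq:tol2}) by $\|G(X^k)\|_F$ yields $\widehat{\eta}_k \leq \widehat{\eta}_{\max}$ immediately.

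For the other bound, I would introduce the shorthand $A := \mathrm{D}G(X^k)\circ(\mathrm{D}G(X^k))^*$, $\sigma := \overline{\sigma}_k$, and $J := A + \sigma\,\mathrm{id}_{T_{G(X^k)}\Rnn}$, together with the residual $V^k := J[\Delta Z^k] + G(X^k)$. By (\ref{eq:tol1}), $\|V^k\|_F \leq \overline{\eta}_k\|G(X^k)\|_F$. The key algebraic manipulation is to rewrite $A[\Delta Z^k] + G(X^k)$ using $A = J - \sigma\,\mathrm{id}$ and $\Delta Z^k = J^{-1}(V^k - G(X^k))$ (which makes sense because the surjectivity of $\mathrm{D}G(X^k)$ makes $A$ positive definite, hence $J$ invertible). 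A short computation gives the decomposition
\[
A[\Delta Z^k] + G(X^k) \;=\; J^{-1}A[V^k] \;+\; \sigma\,J^{-1}[G(X^k)],
\]
which is the identity I expect to do the heavy lifting.

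From this point onward the argument is just operator-norm estimates. Taking Frobenius norms,
\[
\|A[\Delta Z^k] + G(X^k)\|_F \;\le\; \normmm{J^{-1}A}\cdot\|V^k\|_F \;+\; \sigma\,\normmm{J^{-1}}\cdot\|G(X^k)\|_F.
\]
Because $A$ is self-adjoint and positive definite, $J^{-1}A$ and $J^{-1}$ are simultaneously diagonalizable with eigenvalues $\mu/(\mu+\sigma)$ and $1/(\mu+\sigma)$ respectively, where $\mu$ ranges over the spectrum of $A$. Hence $\normmm{J^{-1}A} \leq 1$ and $\normmm{J^{-1}} = 1/(\lambda_{\min}(A)+\sigma)$. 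Combining these with $\|V^k\|_F \leq \overline{\eta}_k\|G(X^k)\|_F$ and dividing by $\|G(X^k)\|_F$ gives the desired estimate.

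The one place that requires care is the bound $\normmm{J^{-1}A}\leq 1$: without it, a naive triangle-inequality bound of $\|\Delta Z^k\|_F$ via $\|J^{-1}\|\cdot(\|V^k\|_F+\|G(X^k)\|_F)$ produces a spurious factor $(1+\overline{\eta}_k)$ in front of $\sigma/(\lambda_{\min}(A)+\sigma)$, which is too weak for the stated bound. Exploiting the decomposition $A[\Delta Z^k]+G(X^k) = J^{-1}A[V^k]+\sigma J^{-1}[G(X^k)]$ and the positive definiteness of $A$ is precisely what avoids this loss; this is the only substantive step in the proof.
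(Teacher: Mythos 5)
Your proof is correct and follows essentially the same route as the paper's. The paper writes the same decomposition as $G(X^k)+A[\Delta Z^k]=(\mathrm{id}-A\circ J^{-1})[G(X^k)]+A\circ J^{-1}[V(X^k)]$, which, since $\mathrm{id}-AJ^{-1}=\sigma J^{-1}$ and $AJ^{-1}=J^{-1}A$, is identical to yours; both proofs then use $\normmm{AJ^{-1}}\le 1$ and $\normmm{\sigma J^{-1}}=\sigma/(\lambda_{\min}(A)+\sigma)$ in exactly the way you describe, and read the $\widehat{\eta}_{\max}$ bound directly off (\ref{eq:tol2}).
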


\begin{proof}
Let $\mathrm{id}$, $J(X^k)$ and $V(X^k)$ be defined as in Lemma \ref{lemma:direnorm}.  By assumption, $\mathrm{D}G(X^k)$ is surjective for all $k$.
It follows from (\ref{eq:direstep}) and (\ref{ieq:vg}) that for all $k$,
\[
\begin{array}{rcl}
& & \| G(X^k) + \mathrm{D} G(X^k)[ \widehat{\Delta X}^k ] \|_F\\[2mm]
&=& \|G(X^k) + \mathrm{D} G(X^k) \big[ (\mathrm{D}G(X^k))^*[ \Delta Z_k ] \big]\|_F\\[2mm]
&=& \|G(X^k)+ \big(\mathrm{D} G(X^k) \circ (\mathrm{D}G(X^k))^*\big)\circ\big(J(X^k)\big)^{-1}
[V(X^k) - G(X^k) ]\|_F \\[2mm]
&\leq& \normmm{\mathrm{id} - \big(\mathrm{D} G(X^k) \circ (\mathrm{D}G(X^k))^*\big)\circ\big(J(X^k) \big)^{-1}}\cdot \|G(X^k)\|_F\\[2mm]
& & + \normmm{\big(\mathrm{D} G(X^k) \circ (\mathrm{D}G(X^k))^*\big)\circ\big(J(X^k) \big)^{-1}}\cdot \|V(X^k)\|_F\\[2mm]
&\leq& \displaystyle \Big(\frac{\overline{\sigma}_k}{\lambda_{\min}\big(\mathrm{D}G(X^k) \circ (\mathrm{D}G(X^k))^*\big)+\overline{\sigma}_k} +  \overline{\eta}_k \Big)\|G(X^k)\|_F.
\end{array}
\]
This, together with (\ref{eq:tol2}), yields (\ref{eq:resnorm}).
\end{proof}

On the repeat-loop of Algorithm \ref{nm1}, we have the following lemma.
\begin{lemma}\label{lemma:finite}
Assume that in the $k$-th iteration of Algorithm {\rm \ref{nm1}}, the operator $\mathrm{D}G(X^k):T_{X^k}(\Rnn \times\co(n)\times \mathcal{V})\to T_{G(X^k)}\Rnn$ is surjective and the linear matrix equation {\rm (\ref{eq:le})} is solvable  such that conditions {\rm (\ref{eq:tol1})} and {\rm (\ref{eq:tol2})}  are satisfied.  Then the repeat-loop terminates
in finite steps with $\Delta X^k$ and $\eta_k$ satisfying
\BE\label{eq:bade}
\left\{
\begin{array}{l}
\| G(X^k) + \mathrm{D} G(X^k)[\Delta X^k] \|_F \leq \eta_k \| G(X^k) \|_F,\\[2mm]
\| G(X^{k+1}) \|_F \leq \big( 1 - t(1-\eta_k) \big)\|G(X^k)\|_F.
\end{array}
\right.
\EE
\end{lemma}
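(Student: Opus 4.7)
The plan is to handle the two conclusions of (\ref{eq:bade}) separately. The second inequality is exactly the exit test of the repeat-loop, so once finite termination is shown it follows at once; the real content is (i) showing that the first inequality is a loop invariant and (ii) showing that the loop does terminate. For the invariant I would argue by induction on the number of passes through the body. At the start of the loop, $\Delta X^k = \widehat{\Delta X}^k$ and $\eta_k = \widehat{\eta}_k$, so by the definition in (\ref{eq:direstep}) the first inequality of (\ref{eq:bade}) holds with equality. For the inductive step I would use the convex decomposition
\[
G(X^k) + \mathrm{D}G(X^k)[\theta \Delta X^k] = (1-\theta)G(X^k) + \theta\bigl(G(X^k) + \mathrm{D}G(X^k)[\Delta X^k]\bigr),
\]
together with the triangle inequality, the inductive hypothesis $\|G(X^k) + \mathrm{D}G(X^k)[\Delta X^k]\|_F \leq \eta_k\|G(X^k)\|_F$, and $\theta\in(0,1)$; this produces exactly the updated bound with the updated value $1-\theta(1-\eta_k)$ playing the role of $\eta_k$.

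For finite termination, I would parametrize the successive iterates of the loop by the cumulative factor $\tau_m := \prod_{j<m}\theta_j$, so that after the $m$-th pass the current direction equals $\tau_m\widehat{\Delta X}^k$ and the current quantity $1-\eta_k$ equals $\tau_m(1-\widehat{\eta}_k)$. Using the pullback (\ref{def:pb}) and the identification (\ref{s1:e2}), Taylor-expand $\widehat{G}_{X^k}$ at $0_{X^k}$ to get $\widehat{G}_{X^k}(\tau_m\widehat{\Delta X}^k) = G(X^k) + \tau_m\mathrm{D}G(X^k)[\widehat{\Delta X}^k] + r_m$ with $\|r_m\|_F = O(\tau_m^2)$ for $\tau_m$ in a fixed neighborhood of $0$. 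Combining this with the linear-level invariant already established gives
\[
\|\widehat{G}_{X^k}(\tau_m\widehat{\Delta X}^k)\|_F \leq \bigl(1-\tau_m(1-\widehat{\eta}_k)\bigr)\|G(X^k)\|_F + O(\tau_m^2).
\]
The exit test is $\|\widehat{G}_{X^k}(\tau_m\widehat{\Delta X}^k)\|_F \leq \bigl(1-t\tau_m(1-\widehat{\eta}_k)\bigr)\|G(X^k)\|_F$, and the gap between its two sides is $(1-t)\tau_m(1-\widehat{\eta}_k)\|G(X^k)\|_F - O(\tau_m^2)$, which is strictly positive for $\tau_m$ small enough, since $t<1$, $\widehat{\eta}_k\leq\widehat{\eta}_{\max}<1$ by (\ref{eq:tol2}), and we may assume $G(X^k)\neq \mathbf{0}_{n\times n}$ (if $G(X^k)=\mathbf{0}_{n\times n}$, the initial exit check already passes trivially). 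Because $\theta_j\leq\theta_{\max}<1$ forces $\tau_m\leq\theta_{\max}^m\to 0$, the test succeeds after finitely many iterations of the body, and at that moment both inequalities in (\ref{eq:bade}) hold.

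The main obstacle is making the Taylor remainder estimate $\|r_m\|_F=O(\tau_m^2)$ rigorous in the Riemannian setting: it requires a $C^2$-bound on the pullback $\widehat{G}_{X^k}$ on a fixed neighborhood of the origin of $T_{X^k}(\Rnn\times\co(n)\times\cv)$. This bound will follow from smoothness of $G$ (polynomial in its arguments) together with smoothness of the retraction $R$ constructed in Appendix A, applied on a compact ball in the tangent space so that the second-derivative operator norm is uniformly controlled. A lighter but essential point, which I want to state explicitly, is the strict inequality $\widehat{\eta}_k<1$ arising from (\ref{eq:tol2}) via $\widehat{\eta}_{\max}<1$; without it the gap above degenerates and the argument collapses.
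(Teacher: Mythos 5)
Your proof is correct and tracks the paper's argument closely: you use the same convex decomposition to establish the first inequality of (\ref{eq:bade}), and you show termination by estimating the Taylor remainder of the pullback $\widehat{G}_{X^k}$ at $0_{X^k}$, with the exit test then supplying the second inequality immediately upon termination. The one technical difference is that you invoke a $C^2$-based $O(\tau_m^2)$ remainder, whereas the paper only uses continuous differentiability of $G$ together with a first-order $o(\cdot)$ estimate (their (\ref{eq:lips22}) with the constant $\epsilon_k$), paired with the norm bound on $\|\widehat{\Delta X}^k\|$ from Lemma \ref{lemma:direnorm} --- both routes succeed here since $G$ is polynomial and the retraction is smooth, but the paper's hypothesis is the lighter one.
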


\begin{proof}
In the repeat-loop, the search direction $\Delta X^k$ is scaled by some $\theta_j \in [\theta_{\min},\theta_{\max}]$
at the $j$-th step. Hence, at the $m$-th step of the repeat-loop, we get
\[
\Delta X^k = \prod^m_{j=1} \theta_j \widehat{\Delta X}^k \quad \text{and} \quad \eta_k = 1-\prod^m_{j=1} \theta_j(1-\widehat{\eta}_k).
\]
Also, we have
\[
\Theta_{m} :=\prod^m_{j=1} \theta_j \leq \prod^m_{j=1} \theta_{\max} = \theta^m_{\max}.
\]

We note that $G$ is continuously differentiable and $0<\theta_{\max}<1$. According to (\ref{def:pb}) and (\ref{s1:e2}), we obtain for all $m$ sufficiently large,
\[
\|G\big(R_{X^k}(\Theta_{m} \widehat{\Delta X}^k)\big) - G(X^k) - \mathrm{D} G(X^k) [ \Theta_{m} \widehat{\Delta X}^k ] \|_F \leq
\epsilon_k\| \Theta_{m} \widehat{\Delta X}^k \|,
\]
and then
\BE\label{eq:lips22}
\|\widehat{G}_{X^k}(\Theta_{m} \widehat{\Delta X}^k)-\widehat{G}_{X^k}(0_{X^k})- \mathrm{D}\widehat{G}_{X^k}(0_{X^k})[\Theta_{m} \widehat{\Delta X}^k] \|_F \leq \epsilon_k\| \Theta_{m} \widehat{\Delta X}^k \|,
\EE
where $\epsilon_k:= ((1-t)(1-\widehat{\eta}_k))/( (1+\overline{\eta}_{\max})\normmm{ (\mathrm{D} F(X^k))^\dag})$.

We now show that the repeat-loop terminates in finite steps. Let $\widehat{m}$ be the smallest integer such that (\ref{eq:lips22}) holds.
Let $\Delta X^k := \Theta_{\widehat{m}} \widehat{\Delta X}^k$.
We get by (\ref{eq:direstep}), (\ref{def:pb}), and (\ref{s1:e2}),
\[
\begin{array}{rcl}
& &\|G(X^k) + \mathrm{D} G(X^k) [\Delta X^k] \|_F \\[2mm]
&=&\|\widehat{G}_{X^k}(0_{X^k})  + \mathrm{D} \widehat{G}_{X^k}(0_{X^k}) [\Delta X^k] \|_F \\[2mm]
&=&\|(1-\Theta_{\widehat{m}})\widehat{G}_{X^k}(0_{X^k}) +  \Theta_{\widehat{m}}\widehat{G}_{X^k}(0_{X^k})
+ \Theta_{\widehat{m}} \mathrm{D} \widehat{G}_{X^k}(0_{X^k}) [\widehat{\Delta X}^k] \|_F \\[2mm]
&\leq & (1-\Theta_{\widehat{m}}) \|\widehat{G}_{X^k}(0_{X^k})\|_F + \Theta_{\widehat{m}}\| \widehat{G}_{X^k}(0_{X^k})
+ \mathrm{D} \widehat{G}_{X^k}(0_{X^k}) [\widehat{\Delta X}^k] \|_F \\[2mm]
&=& (1-\Theta_{\widehat{m}}) \|\widehat{G}_{X^k}(0_{X^k})\|_F + \Theta_{\widehat{m}}
\widehat{\eta}_k \| \widehat{G}_{X^k}(0_{X^k}) \|_F \\[2mm]
&=&\big(1-\Theta_{\widehat{m}} + \Theta_{\widehat{m}}\widehat{\eta}_k \big) \| \widehat{G}_{X^k}(0_{X^k}) \|_F \\[2mm]
&=&\big(1-\Theta_{\widehat{m}}(1-\widehat{\eta}_k)\big) \| G(X^k) \|_F \\[2mm]
&=& \eta_k \| G(X^k) \|_F.
\end{array}
\]
This, together with Lemmas \ref{lemma:direnorm} and \ref{lemma:residual}, (\ref{eq:direstep}), and (\ref{eq:lips22}), yields
\begin{eqnarray*}
& &\| G(X^{k+1}) \|_F = \| \widehat{G}_{X^k}(\Delta X^k) \|_F \\
&\leq& \| \widehat{G}_{X^k}(0_{X^k}) + \mathrm{D} \widehat{G}_{X^k}(0_{X^k}) [\Delta X^k] \|_F\\
&&\qquad+ \|  \widehat{G}_{X^k}(\Delta  X^k) - \widehat{G}_{X^k}(0_{X^k}) - \mathrm{D} \widehat{G}_{X^k}(0_{X^k}) [\Delta X^k]\|_F \\
&\leq&  \eta_k \|\widehat{G}_{X^k}(0_{X^k}) \|_F +\epsilon_k \Theta_{\widehat{m}}\| \widehat{\Delta X}^k  \|_F \\
&\leq&  \eta_k \|\widehat{G}_{X^k}(0_{X^k}) \|_F +\epsilon_k\Theta_{\widehat{m}}(1+\overline{\eta}_k)\normmm{(\mathrm{D} G(X^k))^\dag}\cdot\|G(X^k)\|_F \\
&\leq&  \eta_k \|\widehat{G}_{X^k}(0_{X^k}) \|_F +\epsilon_k\Theta_{\widehat{m}}(1+\overline{\eta}_{\max})\normmm{(\mathrm{D} G(X^k))^\dag}\cdot\|G(X^k)\|_F \\
&=&  \big( \eta_k  +\epsilon_k\Theta_{\widehat{m}}(1+ \overline{\eta}_{\max})\normmm{(\mathrm{D} G(X^k))^\dag}\big)\|G(X^k)\|_F \\
&=&  \Big( \eta_k  +\Theta_{\widehat{m}} \frac{(1-t)(1-\widehat{\eta}_k)}{(1+\overline{\eta}_{\max}) \normmm{(\mathrm{D} G(X^k))^\dag}}
(1+ \overline{\eta}_{\max}) \normmm{(\mathrm{D} G(X^k))^\dag}\Big)\|G(X^k)\|_F \\
&=& \big( \eta_k + \Theta_{\widehat{m}} (1-t)(1-\widehat{\eta}_k) \big) \|G(X^k)\|_F\\
&=& \Big( \eta_k + 1 - \big(1 - \Theta_{\widehat{m}}(1-\widehat{\eta}_k)\big) -t +
t\big(1 - \Theta_{\widehat{m}}(1-\widehat{\eta}_k)\big) \Big) \|G(X^k)\|_F \\
&=& \big( \eta_k + 1 - \eta_k - t + t\eta_k \big) \|G(X^k)\|_F \\
&=& \big( 1- t(1-\eta_k) \big) \|G(X^k)\|_F.
\end{eqnarray*}
\end{proof}

We now establish the global convergence of Algorithm \ref{nm1}. We need the following assumption.
\begin{assumption} \label{ass:nons}
$\mathrm{D} G(\overline{X}):T_{\overline{X}}(\Rnn \times\co(n)\times \mathcal{V})\to T_{G(\overline{X})}\Rnn$ is surjective,
where $\overline{X}\in\Rnn \times\co(n)\times \mathcal{V}$ is an accumulation point of the sequence $\{X^k\}$ generated by Algorithm {\rm \ref{nm1}}.
\end{assumption}

We have the following theorem on the global convergence of Algorithm \ref{nm1}.
\begin{theorem}\label{theorem:global}
Let $\overline{X}$ be an accumulation point of the sequence $\{X^k\}$ generated by Algorithm {\rm \ref{nm1}}. Suppose that Assumption {\rm \ref{ass:nons}} is satisfied.
Then the whole sequence $\{X^k\}$ converges to $\overline{X}$ and $G(\overline{X})={\bf 0}_{n\times n}$.
\end{theorem}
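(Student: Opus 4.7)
The plan is to establish, in three stages, that the sequence $\{\|G(X^k)\|_F\}$ is monotonically nonincreasing and converges to $0$, and then that $\{X^k\}$ itself is Cauchy, so that the accumulation point $\overline{X}$ is the limit of the whole sequence and $G(\overline{X})=\mathbf{0}_{n\times n}$. First, Lemma \ref{lemma:finite} gives $\|G(X^{k+1})\|_F\le(1-t(1-\eta_k))\|G(X^k)\|_F$ with $\eta_k\in[0,1)$, so $\{\|G(X^k)\|_F\}$ is monotone nonincreasing and converges to some $g^*\ge 0$. Along a subsequence $\{X^{k_j}\}\to\overline{X}$, continuity of $G$ yields $g^*=\|G(\overline{X})\|_F$, and it suffices to show $g^*=0$.

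I would argue $g^*=0$ by contradiction, assuming $g^*>0$. Since $\mathrm{D}G(\overline{X})$ is surjective and surjectivity is an open condition on linear maps between fixed finite-dimensional spaces, there is a compact neighborhood $U$ of $\overline{X}$ on which $\mathrm{D}G(X)$ is surjective with uniform bounds $\normmm{(\mathrm{D}G(X))^\dag}\le M$ and $\lambda_{\min}(\mathrm{D}G(X)\circ(\mathrm{D}G(X))^*)\ge c_0>0$. For $j$ large enough that $X^{k_j}\in U$, Lemma \ref{lemma:residual} gives $\widehat{\eta}_{k_j}\le\widehat{\eta}_{\max}<1$; Lemma \ref{lemma:direnorm} combined with the uniform bound $M$ keeps $\|\widehat{\Delta X}^{k_j}\|$ bounded; and the scalar $\epsilon_{k_j}$ in (\ref{eq:lips22}) is bounded below by a positive constant. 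Because $G$ and the retraction $R$ are smooth on the compact set $U$, a uniform modulus-of-continuity argument shows that (\ref{eq:lips22}) is satisfied once $\Theta_m$ is below some threshold independent of $j$, so the repeat-loop terminates within a $j$-independent number of backtracks $\widehat{m}_0$. This gives $\Theta_{\widehat{m}_{k_j}}\ge\theta_{\min}^{\widehat{m}_0}$ and $\eta_{k_j}\le 1-\theta_{\min}^{\widehat{m}_0}(1-\widehat{\eta}_{\max})=:1-\delta<1$; substituting into the Lemma \ref{lemma:finite} inequality and passing to the monotone limit yields $g^*\le(1-t\delta)g^*$, a contradiction.

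For the convergence of the full sequence, having established $\|G(X^k)\|_F\to 0$, I would first shrink $U$ and select $k_{j_0}$ with $X^{k_{j_0}}\in U$ and $\|G(X^{k_{j_0}})\|_F$ small enough that the a priori step-size bound $\nu(1+\overline{\eta}_{\max})M\|G(X^{k_{j_0}})\|_F/(t\delta)$ is smaller than the radius of $U$. An induction using (\ref{retr:bd-2}), Lemma \ref{lemma:direnorm}, and the geometric decrease established in the second stage then shows that $X^k$ stays in $U$ for all $k\ge k_{j_0}$, giving $\|\Delta X^k\|\le(1+\overline{\eta}_{\max})M\|G(X^k)\|_F$ with $\|G(X^k)\|_F$ decaying geometrically. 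The retraction inequality (\ref{retr:bd-2}) converts this into summability of $\mathrm{dist}(X^k,X^{k+1})$, so $\{X^k\}$ is Cauchy; the accumulation-point hypothesis then forces the limit to be $\overline{X}$, and continuity of $G$ gives $G(\overline{X})=\mathbf{0}_{n\times n}$. The main obstacle is the uniform-backtracking bound in the second stage: it requires a $k$-independent first-order Lipschitz-type estimate on the pullback $\widehat{G}_{X}$ over the compact neighborhood $U$, together with careful control of the $k$-dependence of $\epsilon_k$ so that the acceptance threshold in (\ref{eq:lips22}) does not degenerate as $j\to\infty$.
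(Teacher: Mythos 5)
Your proposal is correct and reaches the same conclusion by the same overall strategy --- local surjectivity bounds near $\overline{X}$, a uniform bound on the number of backtracking steps in a neighborhood of $\overline{X}$, and summability of step lengths via (\ref{retr:bd-2}) --- but two of the technical steps are executed genuinely differently from the paper. For the vanishing of $G$, you introduce the monotone limit $g^*:=\lim_k\|G(X^k)\|_F$ explicitly and derive a direct contradiction $g^*\le(1-t\delta)g^*$ by passing to the limit along the subsequence visiting the good neighborhood; the paper instead argues that $\sum_k(1-\eta_k)$ diverges (because $1-\eta_{k_j}\ge\theta_{\min}^{\widehat{m}}(1-\widehat{\eta}_{\max})$ on the subsequence) and then uses the product-to-exponential bound $\|G(X^{k+1})\|_F\le\|G(X^0)\|_F\exp(-t\sum_{l\le k}(1-\eta_l))\to 0$. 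These are equivalent in content but packaged differently. For convergence of the whole sequence, you give a constructive trapping argument: choose $k_{j_0}$ so that $X^{k_{j_0}}$ is near $\overline{X}$ and $\|G(X^{k_{j_0}})\|_F$ is so small that the a priori total-displacement bound keeps all subsequent iterates inside the neighborhood, from which geometric decay of $\|G(X^k)\|_F$ and hence summability of $\mathrm{dist}(X^k,X^{k+1})$ follow, making $\{X^k\}$ Cauchy. The paper instead argues by contradiction, constructing index pairs $\{m_j\},\{n_j\}$ for which the sequence enters and then exits $B_\delta(\overline{X})$, telescoping $\mathrm{dist}(X^{m_j+n_j},X^{m_j})$ against $\|G(X^{m_j})\|_F-\|G(X^{m_{j+1}})\|_F\to0$ to force $\delta/2\le 0$. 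Your trapping version is a mild simplification and avoids constructing the exit subsequences, at the price of a short induction to justify that the iterates remain trapped. One small point of care in your write-up: the geometric decay $\|G(X^k)\|_F\le(1-t\delta)^{k-k_{j_0}}\|G(X^{k_{j_0}})\|_F$ that drives the trapping is itself contingent on $X^k$ remaining in $U$, so it must be established \emph{inside} the induction rather than imported wholesale from the second stage (where it was proved only along the subsequence); as phrased this is slightly circular, but the intended induction closes correctly. You should also include the requirement $\|\Delta X^k\|\le\mu_\nu$ in the ``small enough'' condition so that (\ref{retr:bd-2}) applies throughout.
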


\begin{proof}
By assumption, $\mathrm{D}G(\overline{X})$ is surjective. In addition, $G$ is continuously differentiable.
Thus there exists a sufficiently small constant $\overline{\delta} >0$  such that for any $X$ in a ball $B_{\overline{\delta}}(\overline{X})$ of $\overline{X}$, the liner operator $\mathrm{D} G(X)$ is surjective and
\BE\label{eq:2D}
\normmm{( \mathrm{D}G(X))^\dag} \leq 2 \normmm{( \mathrm{D}G(\overline{X}))^\dag}.
\EE
Let $\epsilon:=  ((1-t)(1-\widehat{\eta}_{\max}))/( 2((1+\overline{\eta}_{\max})\normmm{\mathrm{D}G(\overline{X}))^\dag})$. Then there exist two constants $\delta_1>0$ and $\mu_1 >0$ such that
\BE\label{eq:lip}
\| \widehat{G}_{X}(\Delta X) - \widehat{G}_X(0_{X}) - \mathrm{D} \widehat{G}_{X}(0_{X}) [\Delta X] \|_F \leq \epsilon \| \Delta X\|
\EE
for all $X\in B_{\delta_1}(\overline{X})$ and $\|\Delta X\| \leq \mu_1$. Let $\delta = \min \{\overline{\delta},\delta_1\}$.
Since $\overline{X}$ is an accumulation point of the sequence $\{X^k\}$, there exist infinitely many $k$ such that
$X^k \in B_{\delta}(\overline{X})$. Let $\widehat{m}$ be the smallest integer such that
\[
2\theta^{\widehat{m}}_{\max} (1+ \overline{\eta}_{\max}) \normmm{(\mathrm{D}G(\overline{X}))^\dag} \cdot\| G(X^0)\|_F <  \mu_1.
\]
Let $\Theta_{\widehat{m}} := \prod^{\widehat{m}}_{i=1} \theta_i$.
By Lemma \ref{lemma:direnorm}, (\ref{eq:2D}), and the above inequality, we have
\begin{equation}\label{eq:ssp}
\begin{array}{rcl}
\| \Theta_{\widehat{m}} \widehat{\Delta X}^k \| &\leq& \theta^{\widehat{m}}_{\max} \| \widehat{\Delta X}^k \| \\[2mm]
&\leq& \theta^{\widehat{m}}_{\max} (1+ \overline{\eta}_{k}) \normmm{(\mathrm{D} G(X^k))^\dag}\cdot\| G(X^k)\|_{F} \\[2mm]
&\leq& 2\theta^{\widehat{m}}_{\max} (1+ \overline{\eta}_{\max}) \normmm{(\mathrm{D}G(\overline{X}))^\dag}\cdot\| G(X^0)\|_F \\[2mm]
&<&  \mu_1
\end{array}
\end{equation}
for $X^k\in B_\delta(\overline{X})$. This, together with (\ref{eq:lip}), gives rise to
\[
\| \widehat{G}_{X^k}(\Theta_{\widehat{m}} \widehat{\Delta X}^k) - \widehat{G}_{X^k}(0_{X^k})
-\mathrm{D} \widehat{G}_{X^k}(0_{X^k})[\Theta_{\widehat{m}} \widehat{\Delta X}^k] \|_F
\leq \epsilon \| \Theta_{\widehat{m}} \widehat{\Delta X}^k\|
\]
for $X^k\in B_\delta(\overline{X})$.
By Lemma \ref{lemma:finite}, suppose that the repeat-loop terminates in at most $\widehat{m}$ steps.
Then, for any $X^k \in B_{\delta}(\overline{X})$, we have
\BE\label{bd:etak}
1-\eta_k = \Theta_{\widehat{m}}(1-\widehat{\eta}_k) \geq \theta^{\widehat{m}}_{\min}(1-\widehat{\eta}_{\max}) > 0.
\EE
Since $\overline{X}$ is an accumulation point of $\{X^k\}$, there exists a subsequence $\{X^{k_j}\} \in B_{\delta}(\overline{X})$. Then
\[
\sum_{k\geq 0}(1-\eta_k) = \sum_{k\neq k_j}(1-\eta_k) + \sum_{j\geq 0}(1-\eta_{k_j})
\ge \sum_{k\neq k_j}(1-\eta_k) + \sum_{j\geq 0}\theta^{\widehat{m}}_{\min}(1-\widehat{\eta}_{\max})
=\infty.
\]
Hence, we obtain by (\ref{eq:bade}),
\[
\begin{array}{rcl}
\|G(X^{k+1})\|_F &\leq& \big( 1-t(1-\eta_k) \big) \| G(X^{k}) \|_F
\le \|G(X^0)\|_F \prod_{0\leq l \le k} \big( 1-t(1-\eta_l) \big) \\[2mm]
&\leq& \|G(X^0)\|_F \exp \Big ( -t\sum_{0\leq l \le k}(1-\eta_l) \Big)\to 0,\quad k\to\infty.
\end{array}
\]
Thus we have
\begin{eqnarray}\label{bd:gk}
\lim\limits_{k\to\infty}\|G(X^{k+1})\|_F = 0.
\end{eqnarray}
Next, we show that $\{X^k\}$ converges to $\overline{X}$. We get by (\ref{eq:ssp}) and (\ref{bd:etak}), for $X^k \in B_{\delta}(\overline{X})$,
\begin{equation}\label{eq:unibound}
\begin{array}{rcl}
\| \Delta X^k \| &\leq& \theta^{\widehat{m}}_{\max} \| \widehat{\Delta X}^k \| \\[2mm]
&\leq& 2\theta^{\widehat{m}}_{\max} (1+\overline{\eta}_{\max}) \normmm{(\mathrm{D}G(\overline{X}))^\dag}\cdot\|G(X^k)\|_F \\[2mm]
&\leq& \displaystyle \frac{ 2\theta^{\widehat{m}}_{\max}(1+\overline{\eta}_{\max})\normmm{(\mathrm{D}G(\overline{X}))^\dag}}
{\theta^{\widehat{m}}_{\min}(1-\widehat{\eta}_{\max})}
(1-\eta_k) \|G(X^k)\|_F.
\end{array}
\end{equation}
Based on (\ref{bd:gk}) and (\ref{eq:unibound}), we can obtain
\begin{eqnarray}\label{eq:vxc}
\lim\limits_{k\to\infty}\|\Delta X^k \| = 0.
\end{eqnarray}
Thus for all $k$ sufficiently large with $X^k \in B_{\delta}(\overline{X})$, it holds
\BE\label{retr:bd2}
\| \Delta X^k  \|\leq \mu_{\nu},
\EE
where $\mu_{\nu}$ is the constant given in (\ref{retr:bd-2}).
By contradiction, suppose that the sequence $\{X^k\}$ does not converge to $\overline{X}$.
Then there exist infinitely many $k$ such that $X^k \not\in  B_{\delta}(\overline{X})$.
Since $\overline{X}$ is an accumulation point of $\{X^k\}$, there exist two index sets
 $\{m_j\}$ and $\{n_j\}$ such that $\lim_{j\to \infty}X^{m_j} = \overline{X}$, and for each $j$,
\[
\left\{
\begin{array}{rcl}
X^{m_j} &\in& B_{\delta}(\overline{X}), \quad X^{m_j+i} \in B_{\delta}(\overline{X}), \quad i=0,\ldots,n_j-1, \\[2mm]
X^{m_j+n_j} &\not\in& B_{\delta}(\overline{X}), \quad m_j + n_j  < m_{j+1}.
\end{array}
\right.
\]
Thus, we have by  (\ref{retr:bd-2}), (\ref{eq:bade}), (\ref{eq:unibound}), and (\ref{retr:bd2}),
\[
\begin{array}{rcl}
\displaystyle \frac{\delta}{2} &\leq& \mbox{dist}(X^{m_j+n_j}, X^{m_j})
\leq \sum^{m_j+n_j-1}_{k=m_j} \mbox{dist}(X^{k+1}, X^{k})  \\[2mm]
&=&  \displaystyle \sum^{m_j+n_j-1}_{k=m_j} \mbox{dist}\big(R_{X^{k}}(\Delta X^k), X^{k} \big)
\leq \sum^{m_j+n_j-1}_{k=m_j} \nu \| \Delta X^k \| \\[2mm]
&\leq& \displaystyle \sum^{m_j+n_j-1}_{k=m_j} \nu \frac{2\theta^{\widehat{m}}_{\max}(1+\overline{\eta}_{\max}) \normmm{(\mathrm{D}G(\overline{X}))^\dag}}
{\theta^{\widehat{m}}_{\min}(1-\widehat{\eta}_{\max}) }(1-\eta_k) \|G(X^k)\|_F \\[2mm]
&\leq& \displaystyle \sum^{m_j+n_j-1}_{k=m_j}  \frac{2\nu \theta^{\widehat{m}}_{\max}(1+\overline{\eta}_{\max}) \normmm{(\mathrm{D}G(\overline{X}))^\dag}}
{\theta^{\widehat{m}}_{\min}(1-\widehat{\eta}_{\max}) } \times \frac{ \|G(X^k)\|_F - \|G(X^{k+1})\|_F }{t} \\[2mm]
&=&  \displaystyle \frac{2\nu \theta^{\widehat{m}}_{\max}(1+\overline{\eta}_{\max}) \normmm{(\mathrm{D}G(\overline{X}))^\dag}}
{t\theta^{\widehat{m}}_{\min}(1-\widehat{\eta}_{\max}) } \big( \|G(X^{m_j})\|_F - \|G(X^{m_j+n_j})\|_F \big) \\[2mm]
&\leq& \displaystyle \frac{2\nu \theta^{\widehat{m}}_{\max}(1+\overline{\eta}_{\max}) \normmm{(\mathrm{D}G(\overline{X}))^\dag}}
{t\theta^{\widehat{m}}_{\min}(1-\widehat{\eta}_{\max}) } \big( \|G(X^{m_j})\|_F - \|G(X^{m_{j+1}})\|_F \big)
\\[2mm]
&\to& 0,\quad \mbox{as }\; j\to\infty,
\end{array}
\]
since $X^{m_j} \to \overline{X}$ as $j\to\infty$.
This is a contradiction. Therefore, $\{X^k\}$ converges to $\overline{X}$.
\end{proof}

\subsection{Quadratic convergence}
In this section, we show the quadratic convergence of Algorithm \ref{nm1}.
First, we have the following result on the backtracking line search procedure.
\begin{lemma}\label{lem:stepsize}
Let $\overline{X}$ be an accumulation point of the sequence $\{X^k\}$ generated by Algorithm {\rm \ref{nm1}}.
Suppose that Assumption {\rm \ref{ass:nons}} is satisfied. Then $\eta_k = \widehat{\eta}_k$ and
$\Delta X^k = \widehat{\Delta X}^k$ for all $k$ sufficiently large.
\end{lemma}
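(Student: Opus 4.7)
The plan is to show that once $k$ is large enough, the trial pair $(\widehat{\Delta X}^k,\widehat{\eta}_k)$ produced at the start of Step 2 already satisfies the sufficient-decrease test $\|G(R_{X^k}(\Delta X^k))\|_F\le(1-t(1-\eta_k))\|G(X^k)\|_F$, so the repeat-loop exits immediately without rescaling. The workhorse inputs will be the global convergence result (Theorem \ref{theorem:global}), Lemma \ref{lemma:direnorm} bounding $\|\widehat{\Delta X}^k\|$, and the smoothness estimate \eqref{eq:lip} used in the proof of Theorem \ref{theorem:global}.

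First I would invoke Theorem \ref{theorem:global} to get $X^k\to\overline{X}$ and $\|G(X^k)\|_F\to 0$. By continuity of $\mathrm{D}G$ and the surjectivity of $\mathrm{D}G(\overline{X})$ (Assumption \ref{ass:nons}), there is a neighbourhood of $\overline{X}$ on which the pseudoinverse norm stays below $2\normmm{(\mathrm{D}G(\overline{X}))^\dag}$, exactly as in \eqref{eq:2D}. Combining this with Lemma \ref{lemma:direnorm} yields, for all large $k$,
\[
\|\widehat{\Delta X}^k\|\le (1+\overline{\eta}_{\max})\cdot 2\normmm{(\mathrm{D}G(\overline{X}))^\dag}\cdot \|G(X^k)\|_F\to 0.
\]
In particular $\|\widehat{\Delta X}^k\|\le\mu_1$ eventually, where $\mu_1$ is the radius appearing in the smoothness bound \eqref{eq:lip} with the constant $\epsilon=\frac{(1-t)(1-\widehat{\eta}_{\max})}{2(1+\overline{\eta}_{\max})\normmm{(\mathrm{D}G(\overline{X}))^\dag}}$ used in the proof of Theorem \ref{theorem:global}.

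Next I would bound $\|G(R_{X^k}(\widehat{\Delta X}^k))\|_F=\|\widehat{G}_{X^k}(\widehat{\Delta X}^k)\|_F$ via the triangle inequality by splitting off $\widehat{G}_{X^k}(0_{X^k})+\mathrm{D}\widehat{G}_{X^k}(0_{X^k})[\widehat{\Delta X}^k]$. The first piece has norm exactly $\widehat{\eta}_k\|G(X^k)\|_F$ by the definition of $\widehat{\eta}_k$ in \eqref{eq:direstep}, and the second piece is bounded by $\epsilon\|\widehat{\Delta X}^k\|$ by \eqref{eq:lip}. Chaining with Lemma \ref{lemma:direnorm} and the pseudoinverse bound gives
\[
\|G(R_{X^k}(\widehat{\Delta X}^k))\|_F\le \widehat{\eta}_k\|G(X^k)\|_F+\epsilon(1+\overline{\eta}_{\max})\cdot 2\normmm{(\mathrm{D}G(\overline{X}))^\dag}\|G(X^k)\|_F.
\]
By the choice of $\epsilon$, the coefficient of the second term equals $(1-t)(1-\widehat{\eta}_{\max})\ge (1-t)(1-\widehat{\eta}_k)$, so the right-hand side is at most $(\widehat{\eta}_k+(1-t)(1-\widehat{\eta}_k))\|G(X^k)\|_F=(1-t(1-\widehat{\eta}_k))\|G(X^k)\|_F$.

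Hence the while-condition in Step 2 fails on the first evaluation, the repeat-loop is skipped, and $\Delta X^k=\widehat{\Delta X}^k$, $\eta_k=\widehat{\eta}_k$ as asserted. The only non-routine point is the bookkeeping of constants to ensure that $\epsilon$ is exactly small enough to absorb the smoothness error into the slack $(1-t)(1-\widehat{\eta}_k)$; this is why $\widehat{\eta}_{\max}<1$ and $\overline{\eta}_{\max}<1$ enter the chosen $\epsilon$. Everything else follows from the already-established convergence results.
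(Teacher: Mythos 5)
Your proposal is correct and follows essentially the same path as the paper: both invoke Theorem \ref{theorem:global} and the bound \eqref{eq:2D} to get $\|\widehat{\Delta X}^k\|\to 0$, and both then observe that the linearization error bound \eqref{eq:lip} with the fixed constant $\epsilon$ (which satisfies $\epsilon\le\epsilon_k$) forces the repeat-loop to exit immediately. The only difference is that the paper closes the argument by saying ``based on the analysis in Lemma \ref{lemma:finite}'' whereas you re-derive the sufficient-decrease inequality $\|G(R_{X^k}(\widehat{\Delta X}^k))\|_F\le(1-t(1-\widehat{\eta}_k))\|G(X^k)\|_F$ inline, which is just a more explicit rendering of the same computation.
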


\begin{proof}
We note that $G$ is continuously differentiable.
By assumption, $\mathrm{D}G(\overline{X})$ is surjective. By Theorem \ref{theorem:global}, the sequence $\{X^k\}$
converges to $\overline{X}$ with $G(\overline{X})={\bf 0}_{n\times n}$. Based on (\ref{eq:2D}),
$\mathrm{D} G(X^k)$ is surjective and satisfies
\[
\normmm{( \mathrm{D}G(X^k))^\dag} \leq 2 \normmm{( \mathrm{D}G(\overline{X}))^\dag}
\]
for all $k$ sufficiently large. By Lemma \ref{lemma:direnorm} and the definition of $\overline{\eta}_k$ in Algorithm {\rm \ref{nm1}},
one has for all $k$ sufficiently large,
\[
\begin{array}{rcl}
\| \widehat{\Delta X}^k \| &\le& (1+\overline{\eta}_k) \normmm{(\mathrm{D}G(X^k))^\dag}\cdot\|G(X^k)\|_F \\[2mm]
&\le& (1+\overline{\eta}_{\max}) \normmm{(\mathrm{D}G(X^k))^\dag}\cdot\|G(X^k)\|_F  \\[2mm]
&\leq & 2(1+\overline{\eta}_{\max}) \normmm{(\mathrm{D}G(\overline{X}))^\dag}\cdot\|G(X^k)\|_F.
\end{array}
\]
Based on (\ref{bd:gk}) and the above inequality, we can obtain
$\lim_{k\to\infty}\| \widehat{\Delta X}^k \| = 0$.
Hence, for all $k$ sufficiently large, it holds that
\[
\| \widehat{G}_{X^k}(\widehat{\Delta X}^k) - \widehat{G}_{X^k}(0_{X^k}) - \mathrm{D} \widehat{G}_{X^k}(0_{X^k}) [\widehat{\Delta X}^k] \|_F
\leq \epsilon  \| \widehat{\Delta X}^k\| \leq \epsilon_k  \| \widehat{\Delta X}^k\|,
\]
where the condition $\epsilon \le  \epsilon_k$ is used with $\epsilon_k$ and $\epsilon$ being defined in (\ref{eq:lips22}) and (\ref{eq:lip}). Based on the analysis in Lemma \ref{lemma:finite}, this implies that $\eta_k = \widehat{\eta}_k$ and $\Delta X^k = \widehat{\Delta X}^k$ for all $k$ sufficiently large.
\end{proof}

We now establish the quadratic convergence of Algorithm \ref{nm1}.
\begin{theorem} \label{th:qc}
Let $\overline{X}$ be an accumulation point of the sequence $\{X^k\}$ generated by Algorithm {\rm \ref{nm1}}. Suppose that Assumption {\rm \ref{ass:nons}} is satisfied.
Then the whole sequence $\{X^k\}$ converges to $\overline{X}$ quadratically.
\end{theorem}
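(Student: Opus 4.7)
The plan is to combine the global convergence result (Theorem \ref{theorem:global}) and the fact that backtracking becomes inactive near $\overline{X}$ (Lemma \ref{lem:stepsize}) to first prove a quadratic decay rate $\|G(X^{k+1})\|_F \le C\,\|G(X^k)\|_F^2$, and then to transfer this to a quadratic decay of $\dist(X^k,\overline{X})$ via a telescoping argument along the tail of the sequence. The main subtlety is that the problem is under-determined, so $\dist(X^k,\overline{X})$ cannot be bounded by $\|G(X^k)\|_F$ directly, and some work is required to go from the residual rate to the distance rate.

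First, by Theorem \ref{theorem:global} we have $X^k\to\overline{X}$ and $G(\overline{X})={\bf 0}_{n\times n}$, and by Lemma \ref{lem:stepsize} we may assume $\Delta X^k=\widehat{\Delta X}^k$, $\eta_k=\widehat{\eta}_k$ for all $k$ sufficiently large. Since $\mathrm{D}G(\overline{X})$ is surjective, the self-adjoint operator $\mathrm{D}G(\overline{X})\circ(\mathrm{D}G(\overline{X}))^*$ is positive definite; by continuity there exist $\lambda^*>0$ and $k_0$ such that $\lambda_{\min}\bigl(\mathrm{D}G(X^k)\circ(\mathrm{D}G(X^k))^*\bigr)\ge \lambda^*$ and $\normmm{(\mathrm{D}G(X^k))^\dag}\le 2\normmm{(\mathrm{D}G(\overline{X}))^\dag}$ for all $k\ge k_0$. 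Because $\|G(X^k)\|_F\to 0$, for large $k$ the definitions give $\overline{\sigma}_k=\overline{\eta}_k=\|G(X^k)\|_F$, and then Lemma \ref{lemma:residual} yields
\[
\widehat{\eta}_k \le \frac{\|G(X^k)\|_F}{\lambda^*+\|G(X^k)\|_F} + \|G(X^k)\|_F = O\bigl(\|G(X^k)\|_F\bigr).
\]

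Second, I would derive quadratic decay of the residual. Writing $\|G(X^{k+1})\|_F=\|\widehat{G}_{X^k}(\Delta X^k)\|_F$ and using the $C^2$-smoothness of the pullback $\widehat{G}_{X^k}$ together with the identification (\ref{s1:e2}), there is a constant $M$ uniform in a neighborhood of $\overline{X}$ such that
\[
\bigl\|\widehat{G}_{X^k}(\Delta X^k)-\widehat{G}_{X^k}(0_{X^k})-\mathrm{D}\widehat{G}_{X^k}(0_{X^k})[\Delta X^k]\bigr\|_F \le M\|\Delta X^k\|^2.
\]
By Lemma \ref{lemma:direnorm} and the uniform bound on $\normmm{(\mathrm{D}G(X^k))^\dag}$, one has $\|\Delta X^k\|=O(\|G(X^k)\|_F)$. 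Combining this with the inexact Newton condition $\|G(X^k)+\mathrm{D}G(X^k)[\Delta X^k]\|_F\le\eta_k\|G(X^k)\|_F$ and the estimate $\eta_k=O(\|G(X^k)\|_F)$ established above produces a constant $C>0$ with
\[
\|G(X^{k+1})\|_F \le C\,\|G(X^k)\|_F^2
\]
for all $k$ large enough.

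Finally, I would convert the residual bound into a distance bound. Lipschitz continuity of $G$ near $\overline{X}$, together with $G(\overline{X})={\bf 0}_{n\times n}$, gives $\|G(X^k)\|_F\le L\,\dist(X^k,\overline{X})$. For the other direction, use the retraction bound (\ref{retr:bd-2}) and the estimate $\|\Delta X^j\|=O(\|G(X^j)\|_F)$ to telescope:
\[
\dist(X^{k+1},\overline{X}) \le \sum_{j\ge k+1}\dist(X^{j+1},X^j) \le \nu\sum_{j\ge k+1}\|\Delta X^j\| \le \nu C'\sum_{j\ge k+1}\|G(X^j)\|_F.
\]
The quadratic inequality $\|G(X^{j+1})\|_F\le C\|G(X^j)\|_F^2$ forces the tail to be dominated by a geometric series whose leading term is $\|G(X^{k+1})\|_F$, so $\sum_{j\ge k+1}\|G(X^j)\|_F\le \widetilde{C}\,\|G(X^{k+1})\|_F$. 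Therefore
\[
\dist(X^{k+1},\overline{X}) \le \nu C' \widetilde{C}\,\|G(X^{k+1})\|_F \le \nu C' \widetilde{C}\,C\,\|G(X^k)\|_F^2 \le \nu C' \widetilde{C}\,C\,L^2\,\dist(X^k,\overline{X})^2,
\]
which is the desired quadratic convergence. The main obstacle is the tail-sum step in the last paragraph: without the under-determinedness, one would just invert $\mathrm{D}G(\overline{X})$ locally, but here I must exploit the already-established quadratic decay of $\|G(X^j)\|_F$ to control how far the iterates can still drift after index $k$.
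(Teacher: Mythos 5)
Your proposal is correct and follows essentially the same route as the paper: eliminate the backtracking via Lemma \ref{lem:stepsize}, bound $\widehat{\eta}_k = O(\|G(X^k)\|_F)$ via Lemma \ref{lemma:residual}, derive quadratic decay of the residual from the pullback's second-order Taylor remainder and the step-length bound $\|\Delta X^k\| = O(\|G(X^k)\|_F)$, then telescope $\dist(X^{k+1},\overline X)$ through the retraction bound (\ref{retr:bd-2}) and a geometric tail estimate on $\sum_{j\ge k+1}\|G(X^j)\|_F$. The only cosmetic difference is that you bound the tail sum by invoking the quadratic decay itself (which eventually forces at least a halving per step), whereas the paper simply reuses the algorithm's built-in linear decrease $\|G(X^{j+1})\|_F \le (1-t(1-\widehat\eta_{\max}))\|G(X^j)\|_F$; both yield the same geometric bound.
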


\begin{proof}
By Theorem \ref{theorem:global} and Lemma \ref{lem:stepsize}, $\{X^k\}$ converges to $\overline{X}$ with $G(\overline{X})={\bf 0}_{n\times n}$ and $\eta_k = \widehat{\eta}_k$ and $\Delta X^k = \widehat{\Delta X}^k$ for all $k$ sufficiently large with $\|\Delta X^k\|=\|\widehat{\Delta X}^k\|\to 0$
as $k\to\infty$. We note that $G$ is continuously differentiable and, by assumption, $\mathrm{D}G(\overline{X})$ is surjective.
Based on (\ref{eq:2D}), $\mathrm{D} G(X^k)$ is surjective, and there exists a constant $\overline{\la}_{\min} >0$ such that
\BE\label{bd:gxk-eig}
\normmm{( \mathrm{D}G(X^k))^\dag} \leq 2 \normmm{( \mathrm{D}G(\overline{X}))^\dag}\;\mbox{and}\; \lambda_{\min}(\mathrm{D}G(X^k) \circ (\mathrm{D}G(X^k))^*)\ge \overline{\la}_{\min} >0
\EE
for all $k$ sufficiently large. Moreover, there exist two constants $L_1,L_2>0$ such that for all $k$ sufficiently large,
\BE\label{eq:lip4}
\left\{
\begin{array}{l}
\| G(X^k) - G(\overline{X}) \|_F \leq L_1 \mbox{dist}(X^k, \overline{X}), \\[2mm]
\| \widehat{G}_{X^k}(\Delta X^k) - \widehat{G}_{X^k}(0_{X^k}) - \mathrm{D} \widehat{G}_{X^k}(0_{X^k}) [\Delta X^k] \|_F
\leq L_2 \| \Delta X^k\|^2,\\[2mm]
\mbox{dist}\big(X^k, R_{X^k}(\Delta X^k) \big)\le\nu\|\Delta X^k\|,
\end{array}
\right.
\EE
where $\nu$ is the constant given in (\ref{retr:bd-2}).

We obtain by (\ref{eq:resnorm}), (\ref{bd:gxk-eig}), (\ref{eq:lip4}), and the definition of $\overline{\sigma}_k$
and $\overline{\eta}_k$ in Algorithm {\rm \ref{nm1}} for all $k$ sufficiently large,
\begin{equation}\label{bd:etak-2}
\begin{array}{rcl}
\widehat{\eta}_k &\leq&  \displaystyle \frac{\overline{\sigma}_k}{\lambda_{\min}\big(\mathrm{D}G(X^k) \circ (\mathrm{D}G(X^k))^*\big)+\overline{\sigma}_k} +\overline{\eta}_k \\[2mm]
&\leq& \displaystyle \frac{1}{\overline{\la}_{\min}+\overline{\sigma}_k}\overline{\sigma}_k + \overline{\eta}_k
\le \displaystyle \frac{1}{\overline{\la}_{\min}}\|G(X^{k})\|_F + \|G(X^{k})\|_F \\[2mm]
&\leq& \displaystyle \frac{1+\overline{\la}_{\min}}{\overline{\la}_{\min}}L_1\mbox{dist}(X^k, \overline{X})
\equiv c_1\mbox{dist}(X^k, \overline{X}),
\end{array}
\end{equation}
where $c_1 := (L_1(1+\overline{\la}_{\min}))/ \overline{\la}_{\min}$.
By Lemmas \ref{lemma:direnorm} and \ref{lemma:residual},  (\ref{eq:direstep}), (\ref{bd:gxk-eig}), (\ref{eq:lip4}), and  (\ref{bd:etak-2}),
we have for all $k$ sufficiently large,
\begin{equation}\label{bd:gxk1}
\begin{array}{rcl}
& &\| G(X^{k+1}) \|_F \\[2mm]
&=& \| G(X^{k+1}) - G(X^k) - \mathrm{D} G(X^k) [\Delta X^k] + G(X^k) + \mathrm{D} G(X^k) [\Delta X^k] \|_F \\[2mm]
&\leq& \| \widehat{G}_{X^k}(\Delta X^k) - \widehat{G}_{X^k}(0_{X^k}) - \mathrm{D} \widehat{G}_{X^k}(0_{X^k}) [\Delta X^k] \|_F\\[2mm]
&&\qquad+ \|\widehat{G}_{X^k}(0_{X^k}) + \mathrm{D} \widehat{G}_{X^k}(0_{X^k}) [\Delta X^k]\|_F \\[2mm]
&\leq& L_2 \|\Delta X^k\|^2 + \widehat{\eta}_k \|G(X^k)\|_F\\[2mm]
&\leq& L_2\big( (1+\overline{\eta}_k) \normmm{ (\mathrm{D} G(X^k))^\dag }\cdot\|G(X^k)\| \big)^2  + \widehat{\eta}_k \|G(X^k)\|_F \\[2mm]
&\leq& L_2\big( (1+\overline{\eta}_k) \normmm{ (\mathrm{D} G(X^k))^\dag } \big)^2 \big( L_1\mbox{dist}(X^k, \overline{X}) \big)^2
+ \widehat{\eta}_k L_1 \mbox{dist}(X^k, \overline{X}) \\[2mm]
&\leq& L_2\big( 2(1+\overline{\eta}_{\max}) L_1\normmm{(\mathrm{D}G(\overline{X}))^\dag} \big)^2 \big( \mbox{dist}(X^k, \overline{X})\big)^2
+ c_1 L_1 \big(\mbox{dist}(X^k, \overline{X}) \big)^2 \\[2mm]
& \equiv & c_2 \big(\mbox{dist}(X^k, \overline{X})\big)^2,
\end{array}
\end{equation}
where $c_2:=L_2\big( 2(1+\overline{\eta}_{\max}) L_1\normmm{(\mathrm{D}G(\overline{X}))^\dag } \big)^2+ c_1 L_1$.
We have by  Lemma \ref{lemma:direnorm}, (\ref{bd:gxk-eig}),  (\ref{eq:lip4}), and (\ref{bd:gxk1}), for all $k$ sufficiently large,
\[
\begin{array}{rcl}
\mbox{dist}(X^{k+1}, \overline{X})
&\leq& \displaystyle \sum^{\infty}_{j=k+1} \mbox{dist} (X^j, X^{j+1})
= \sum^{\infty}_{j=k+1} \mbox{dist} \big(X^j, R_{X^j}( \Delta X^j) \big)\\[2mm]
&\leq&  \displaystyle \sum^{\infty}_{j=k+1} \nu \| \Delta X^j \|
\leq \sum^{\infty}_{j=k+1} 2\nu  (1+\overline{\eta}_{\max}) \normmm{ (\mathrm{D}G(\overline{X}))^\dag }\cdot\|G(X^j)\|_F \\[2mm]
&\leq&  \displaystyle 2\nu (1+\overline{\eta}_{\max}) \normmm{ (\mathrm{D}G(\overline{X}))^\dag }
\sum^{\infty}_{j=0}\big(1-t(1-\widehat{\eta}_{\max}) \big)^j \|G(X^{k+1})\|_F \\[2mm]
&=& \displaystyle \frac{2\nu (1+\overline{\eta}_{\max}) \normmm{ (\mathrm{D}G(\overline{X}))^\dag }}
{t(1-\widehat{\eta}_{\max})} \|G(X^{k+1})\|_F \\[2mm]
&\equiv& \displaystyle c_3 \big(\mbox{dist}(X^k, \overline{X})\big)^2,
\end{array}
\]
where $c_3=:(2c_2\nu (1+\overline{\eta}_{\max}) \normmm{ (\mathrm{D}G(\overline{X}))^\dag })/(t(1-\widehat{\eta}_{\max}))$.
Thus the proof is complete.

\end{proof}

\subsection{Surjectivity conditions of $\mathrm{D}G(\cdot)$}

We have the following result on the surjectivity of $\mathrm{D}G(\overline{X})$, where $\overline{X}$ is an accumulation point of
the sequence $\{X^k\}$ generated by Algorithm \ref{nm1}.
\begin{theorem}\label{riediff:frk2}
Let $\overline{X}:=(\overline{S},\overline{Q},\overline{V})\in\Rnn \times \co(n) \times \cv$ be an accumulation point of the sequence $\{X^k:=(S^k,Q^k,V^k)\}$ generated by Algorithm {\rm \ref{nm1}}. Then $\mathrm{D} G(\overline{X})$ is surjective if and only if
\BE\label{sur:cond}
\mathrm{null}\left(
\left[
\begin{array}{c}
\mathrm{Diag}\big(\mathrm{vec}(\overline{S})\big) \\[2mm]
(I_{n^2} - \widehat{P})\big( (\overline{S}\odot \overline{S}) \otimes I_{n}-  I_{n}\otimes(\overline{S}\odot \overline{S})^T \big) \\[2mm]
\mathrm{Diag}\big(\mathrm{vec}(W)\big) (\overline{Q} \otimes \overline{Q})^T
\end{array}
\right] \right)
= \{ {\bf 0}_{n^2} \},
\EE
where $W\in\Rnn$ is defined in Appendix A and $\widehat{P}\in \R^{n^2\times n^2}$ is the vectorized transpose matrix such that
\[
{\rm vec}(A^T) = \widehat{P}\,{\rm vec}(A), \quad \forall A\in \R^{n\times n}.
\]

\end{theorem}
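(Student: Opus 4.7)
The plan is to invoke the standard duality that a linear map between finite-dimensional inner-product spaces is surjective if and only if its adjoint has trivial kernel. Writing $Y\in \Rnn \simeq T_{G(\overline{X})}\Rnn$ for a general codomain vector, I will show that $Y\in\mathrm{null}\big((\mathrm{D}G(\overline{X}))^*\big)$ iff $\mathrm{vec}(Y)$ lies in the null space of the block matrix in (\ref{sur:cond}). The characterization proceeds through the pairing
\[
\big\langle \mathrm{D}G(\overline{X})[(\xi,\zeta,\eta)],\,Y\big\rangle = 0, \qquad \forall\, (\xi,\zeta,\eta)\in T_{\overline{X}}(\Rnn\times\co(n)\times\cv).
\]
Since the tangent space of the product manifold splits as a direct sum, I may vary the three components $\xi$, $\zeta$, $\eta$ in isolation, which yields three independent linear conditions on $Y$ that will become the three block rows.

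Substituting the differential formula from Appendix A,
\[
\mathrm{D}G(\overline{X})[(\xi,\zeta,\eta)] = 2\overline{S}\odot\xi - \zeta(\Lambda+\overline{V})\overline{Q}^T - \overline{Q}(\Lambda+\overline{V})\zeta^T - \overline{Q}\eta\overline{Q}^T,
\]
the $\xi$-variation over $\Rnn$ immediately gives $\overline{S}\odot Y=\mathbf{0}_{n\times n}$, equivalently $\mathrm{Diag}(\mathrm{vec}(\overline{S}))\mathrm{vec}(Y)=\mathbf{0}$, which is the first block row. The $\eta$-variation over $\cv$ collapses by cyclicity of the trace to $\langle \eta,\overline{Q}^T Y\overline{Q}\rangle=0$ for every $\eta\in\cv$; in other words, $\overline{Q}^T Y\overline{Q}$ must vanish off the index set $\mathcal{I}$. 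With $W$ from Appendix A encoding the indicator of $\cn\setminus\mathcal{I}$ and the Kronecker identity $\mathrm{vec}(\overline{Q}^T Y\overline{Q})=(\overline{Q}\otimes\overline{Q})^T\mathrm{vec}(Y)$, this is exactly the third block row.

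The $\zeta$-variation is the main obstacle. I parametrize $\zeta=\overline{Q}\Omega$ with $\Omega^T=-\Omega$, substitute into the pairing, and reorganize using cyclicity of the trace to arrive at $\mathrm{tr}\big([M,N^T]\Omega\big)=0$, where $M:=\overline{Q}^T Y\overline{Q}$ and $N:=\Lambda+\overline{V}$. Requiring this for every skew $\Omega$ forces the skew part of $[M,N^T]$ to vanish, i.e., $[M,N^T]$ is symmetric. Theorem \ref{theorem:global} guarantees $G(\overline{X})=\mathbf{0}_{n\times n}$ at any convergent accumulation point produced by Algorithm \ref{nm1}, so that $\overline{Q}N\overline{Q}^T = \overline{S}\odot\overline{S}=:\overline{C}$; conjugating by $\overline{Q}$ then transports the symmetry condition to $[Y,\overline{C}^T]=[Y,\overline{C}^T]^T$. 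The standard identities $\mathrm{vec}(Y\overline{C}^T)=(\overline{C}\otimes I_n)\mathrm{vec}(Y)$ and $\mathrm{vec}(\overline{C}^T Y)=(I_n\otimes\overline{C}^T)\mathrm{vec}(Y)$, combined with the fact that $I_{n^2}-\widehat{P}$ realizes $M\mapsto M-M^T$ on vectorizations, assemble the second block row in the exact form of (\ref{sur:cond}).

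Stacking the three rows yields the claimed equivalence: $Y\in\mathrm{null}\big((\mathrm{D}G(\overline{X}))^*\big)$ iff $\mathrm{vec}(Y)$ lies in the joint null space of the three rows, which proves the theorem by the aforementioned duality. The delicate points are (i) ensuring that the skew constraint on $\Omega$ forces symmetry (not antisymmetry) of the commutator, (ii) invoking $G(\overline{X})=\mathbf{0}$ to replace $\Lambda+\overline{V}$ by the data quantity $\overline{S}\odot\overline{S}$ in the final form of the second row, and (iii) bookkeeping the Kronecker and transpose identities so that the factor $(\overline{C}\otimes I_n)-(I_n\otimes \overline{C}^T)$ appears in exactly the orientation written in the theorem.
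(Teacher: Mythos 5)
Your proof is correct and follows essentially the same route as the paper's: reduce surjectivity of $\mathrm{D}G(\overline{X})$ to triviality of $\ker\big((\mathrm{D}G(\overline{X}))^*\big)$, extract the three scalar conditions by varying $\xi$, $\zeta$, $\eta$ in isolation, and vectorize using the Kronecker identities and $\widehat{P}$. The only stylistic difference is that you derive the conditions directly from the trace pairing $\langle\mathrm{D}G(\overline{X})[\cdot],Y\rangle=0$ (including the worked-out $\zeta$-variation via the skew parameter $\Omega$ and the symmetry of $[M,N^T]$), whereas the paper reads off the explicit adjoint formula from Appendix A; both yield the same block rows. You do add one genuinely helpful point: you make explicit the substitution of $\overline{Q}(\Lambda+\overline{V})\overline{Q}^T$ by $\overline{S}\odot\overline{S}$ in the second block row, justified by $G(\overline{X})=\mathbf{0}_{n\times n}$, whereas the paper performs this silently with ``This is reduced to (\ref{sur:cond}).'' One caveat to flag: your appeal to Theorem \ref{theorem:global} to obtain $G(\overline{X})=\mathbf{0}_{n\times n}$ presupposes Assumption \ref{ass:nons}, i.e.\ surjectivity of $\mathrm{D}G(\overline{X})$, so it cleanly supports only the forward implication of the iff; the reverse implication relies on the same replacement, and so one should either read the theorem with the tacit extra hypothesis $G(\overline{X})=\mathbf{0}_{n\times n}$ or state the second block row with $\overline{Q}(\Lambda+\overline{V})\overline{Q}^T$ instead of $\overline{S}\odot\overline{S}$. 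This subtlety is present, and even less visible, in the paper's own proof.
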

\begin{proof}
Notice
$T_{G(\overline{X})}\Rnn=\im(\mathrm{D}G(\overline{X}))\oplus \im(\mathrm{D}G(\overline{X}))^\perp$ and
$\im(\mathrm{D}G(\overline{X}))^\perp=\ker\big( (\mathrm{D}G(\overline{X}))^*\big)$,
where $\im(\mathrm{D}G(\overline{X}))$ and $\ker\big( (\mathrm{D}G(\overline{X}))^*\big)$ denote the image of $\mathrm{D}G(\overline{X})$ and the kernel of $(\mathrm{D}G(\overline{X}))^*$, respectively.
Then the linear operator $\mathrm{D}G(\overline{X})$ is surjective if and only if $\ker( (\mathrm{D}G(\overline{X}))^*)=\{\mathbf{0}_{n\times n}\}$.

We now derive a sufficient and necessary condition for $\ker\big( (\mathrm{D}G(\overline{X}))^*\big)=\{\mathbf{0}_{n\times n}\}$. Let $\Delta Z\in T_{G(\overline{X})}\Rnn$ be such that $(\mathrm{D}G(\overline{X}))^*[\Delta Z]=0_{\overline{X}}$.
We have by the expression of $\mathrm{D}G(\cdot)^*$ given in Appendix A, $\ker\big( (\mathrm{D}G(\overline{X}))^*\big)=\{\mathbf{0}_{n\times n}\}$ if and only if the following equation
\[
\left\{
\begin{array}{l}
\overline{S}\odot \Delta Z = \mathbf{0}_{n\times n},\\[2mm]
[ \overline{Q}(\Lambda + \overline{V})(\overline{Q})^T, (\Delta Z)^T ]
 + [ \overline{Q}(\Lambda + \overline{V})^T(\overline{Q})^T, \Delta Z ]  = \mathbf{0}_{n\times n}, \\[2mm]
W\odot((\overline{Q})^T\Delta Z\overline{Q}) = \mathbf{0}_{n\times n}
\end{array}
\right.
\]
has only a zero solution $\Delta Z=\mathbf{0}_{n\times n}$ or
\[
\left\{
\begin{array}{l}
\mathrm{Diag}\big(\mathrm{vec}(\overline{S})\big) \mathrm{vec}(\Delta Z) = \mathbf{0}_{n^2},\\[2mm]
(I_{n^2} - \widehat{P})(\overline{Q} \otimes \overline{Q})\big(  (\Lambda+\overline{V})\otimes I_n  -  I_{n} \otimes (\Lambda+\overline{V})^T  \big)( \overline{Q} \otimes \overline{Q})^T \mathrm{vec}(\Delta Z) = \mathbf{0}_{n^2},\\[2mm]
\mathrm{Diag}\big(\mathrm{vec}(W)\big) (\overline{Q} \otimes \overline{Q})^T \mathrm{vec}(\Delta Z) = \mathbf{0}_{n^2}
\end{array}
\right.
\]
has only a zero solution $\mathrm{vec}(\Delta Z)=\mathbf{0}_{n^2}$, where the relation
\[
\widehat{P}(A \otimes B) = (B \otimes A)\widehat{P},\quad  \forall A,B\in \Rnn
\]
is used \cite[p.448]{Bernstein}. This is reduced to (\ref{sur:cond}). The proof is complete.
\end{proof}

\section{Extensions}\label{sec5}
In this section, we extend the proposed Riemannian inexact Newton-CG method to the case of prescribed entries.
The nonnegative inverse eigenvalue problem with prescribed entries can be stated as follows:

{\bf NIEP-PE.} {\em Given a self-conjugate set of $n$ complex numbers $\{\lambda_1, \lambda_2, \ldots, \lambda_n\}$, find an $n$-by-$n$
real nonnegative matrix $C$ such that its eigenvalues are $\lambda_1, \lambda_2, \ldots , \lambda_n$ and
\[
(C)_{ij}=(C_a)_{ij},\quad \forall (i,j)\in\cl,
\]
where $\cl\subset\cn$ is a given index subset and $C_a$ is any  given $n$-by-$n$ nonnegative matrix such that $\{(C_a)_{ij}\;|\; (i,j)\in\cl\}$  are prescribed entries.}

Define  the matrix $\widehat{U} \in \Rnn$   by $(\widehat{U})_{ij} =1$, if $(i,j) \in \cl$; $0$,  otherwise.
Let the  matrix $\widehat{C}_a\in\Rnn$ be defined by $\widehat{C}_a := \widehat{U} \odot C_a$. Also, define a set $\cz$ by
\[
\cz:=\{S\in\Rnn \ | \ \widehat{U} \odot S= \mathbf{0}_{n\times n}\}.
\]
Then the NIEP-PE is to solve the following nonlinear equation:
\BE\label{NIEP-PE}
H(S,Q,V)= \mathbf{0}_{n\times n}
\EE
for $(S,Q,V)\in\cz\times \co(n)\times \cv$,
where    $H:\cz\times \co(n)\times \cv\to\Rnn$ is defined by
\[
H(S,Q,V)= \widehat{C}_a + S \odot S - Q(\Lambda + V)Q^T,\quad (S,Q,V)\in\cz \times \co(n) \times \cv.
\]
Obviously, $H$ is smooth mapping from the product manifold $\cz\times \co(n)\times \cv$ to the linear space $\Rnn$.

We note that the dimension of $\cz\times \co(n)\times \cv$ is given by
\[
\dim (\cz\times \co(n)\times \cv) = n^2 - |\cl| + \frac{n(n-1)}{2} + |\cj|.
\]
We point out that the nonlinear equation $H(S,Q,V)=\mathbf{0}_{n\times n}$ is under-determined over
$\cz\times \co(n)\times \cv$ if the problem size $n$ is large and the number  $|\cl|$ of prescribed entries is small.
We also remark that, if $(\overline{S},\overline{Q},\overline{V})\in\cz\times \co(n)\times \cv$ is a solution to
$H(S,Q,V)= \mathbf{0}_{n\times n}$, then $\overline{C}:=\widehat{C}_a+\overline{S}\odot\overline{S}$
 is a solution to the NIEP-PE.

As in section \ref{sec4}, one may apply Algorithm \ref{nm1} to solving the nonlinear equation  (\ref{NIEP-PE}). Under some mild conditions, the global and quadratic convergence  can be established by a similar way as in section \ref{sec4}.

\section{Numerical Tests}\label{sec6}

In this section, we report the numerical performance of Algorithm \ref{nm1} for solving the NIEP and the NIEP-PE via solving the nonlinear equations (\ref{eq:NIEP1}) and  (\ref{NIEP-PE}).
All the numerical tests are carried out by using {\tt MATLAB} 7.1 running on a workstation with a Intel Xeon CPU E5-2687W at  3.10 GHz and 32 GB of RAM.
To illustrate the efficiency of our algorithm, we compare  Algorithm \ref{nm1} with the alternating projection method \cite{O06}, the Riemannian Fletcher-Reeves conjugate gradient method (RFR) \cite{YBZC16} and the geometric Polak-Ribi\`{e}re-Polyak-based nonlinear conjugate gradient method (GPRP) \cite{ZJB16}. The alternating projection method in \cite{O06} is employed to solve the NIEP:
\BE\label{pro:niep}
\mbox{Find $C\in\cp\cap\Rnn_+$}
\EE
and  the NIEP-PE:
\BE\label{pro:niep-pe}
\mbox{Find $C\in\cp\cap\cq$},
\EE
where
\[
\cp=\{ A \in \Cnn \ | \ \mbox{$A=UTU^H$ for some unitary matrix $U$ and  some $T\in\ct$} \}
\]
and
\[
\cq=\{C\in\Rnn_+\ | \ \mbox{$(C)_{ij}=(C_a)_{ij}$ for all $(i,j)\in\cl$}\}.
\]
Here, $\ct = \{ T \in \Cnn\ | \ \mbox{$T$ is upper triangular with spectrum $\{\lambda_1, \lambda_2, \ldots , \lambda_n\}$}\}$. The associated alternating projection algorithm for solving problem (\ref{pro:niep}) (problem (\ref{pro:niep-pe}), respectively) is stated as follows.

\begin{algorithm} \label{ap}
 {\rm (Alternating projection algorithm)}
\begin{description}
\item [{Step 0.}] Choose an initial point $C^0\in\Rnn_+$ {\rm (}$C^0\in\cq$, respectively{\rm )}. Let $k:=0$.
\item [{Step 1.}] Calculate a Schur decomposition of $C^k=U^kT^k(U^k)^H$.
\item [{Step 2.}] Set $Y^{k+1}=P_\cp(U^k,T^k)$, where $P_\cp(U^k,T^k)$ is defined as in \cite[Definition 4.2]{O06}.
\item [{Step 3.}] Set $C^{k+1}=P_{\Rnn_+}(Y^{k+1})$ {\rm (}$C^{k+1}=P_{\cq}(Y^{k+1})$, respectively{\rm )}, where $P_{\Rnn_+}(Y^{k+1})$ is the  projection of $Y^{k+1}$ onto $\Rnn_+$.
\item [{Step 4.}] Replace $k$ by $k+1$ and go to {\bf Step 1}.
\end{description}
\end{algorithm}

The two Riemannian conjugate gradient methods RFR and GPRP in \cite{YBZC16,ZJB16} are used to solve the following Riemannian optimization problems:
\BE\label{opt:rp}
\begin{array}{ll}
{\min}  &   \displaystyle\phi(S,Q,V):=\frac{1}{2}\|G(S,Q,V)\|_F^2 \\[2mm]
{\rm s.t.} & (S,Q,V)\in \Rnn \times \co(n) \times \cv
\end{array}
\EE
and
\BE\label{opt:rp-pe}
\begin{array}{ll}
{\min}  &   \displaystyle \psi(S,Q,V):=\frac{1}{2}\|H(S,Q,V)\|_F^2\\[2mm]
{\rm s.t.} &  (S,Q,V)\in \cz \times \co(n) \times \cv.
\end{array}
\EE

For Algorithm \ref{nm1} for solving (\ref{eq:NIEP1}), Algorithm \ref{ap} for problem (\ref{pro:niep}), and RFR and GPRP  for problem (\ref{opt:rp}), we randomly generate the starting points by the built-in functions {\tt rand}, {\tt schur}, and  {\tt svd}:
\BE\label{sp1}
\begin{array}{c}
S \odot S= {\tt rand}\,(n,n),  \quad S^0 = S \in\Rnn, \quad C^0 = S^0\odot S^0,  \\[2mm]
\big[ Q^0, V \big] = \mbox{\tt schur}\,(S^0\odot S^0,{\rm 'real'}) , \quad  V^0 = W\odot V.
\end{array}
\EE
For Algorithm \ref{nm1} for solving (\ref{NIEP-PE}), Algorithm \ref{ap}  for problem (\ref{pro:niep-pe}), and RFR and GPRP  for problem (\ref{opt:rp-pe}), the  starting points are generated randomly as follows:
\BE\label{sp1-pe}
\begin{array}{c}
S \odot S= {\tt rand}\,(n,n),  \quad  S^0 = \widehat{U}\odot S \in\cz, \quad C^0 = \widehat{C}_a + S^0\odot S^0,  \\[2mm]
\big[ Q^0, V \big] = \mbox{\tt schur}\,(\widehat{C}_a+S^0\odot S^0,{\rm 'real'}) , \quad  V^0 = W\odot V.
\end{array}
\EE

For comparison purposes, the stopping criteria for Algorithm \ref{nm1}, Algorithm \ref{ap} for problems (\ref{pro:niep}) and (\ref{pro:niep-pe}), and  the two Riemannian conjugate gradient methods in \cite{YBZC16,ZJB16} for problems (\ref{opt:rp}) and (\ref{opt:rp-pe}) are set to be
\[
 \|G(X^k)\|_F < 10^{-8}, \quad  \|H(X^k)\|_F < 10^{-8},\quad \mbox{and} \quad  \|C^k-Y^k\|_F < 10^{-8}.
\]
In our numerical tests, we set $\overline{\sigma}_{\max}=0.01$, $\overline{\eta}_{\max}=0.1$,
$\widehat{\eta}_{\max}=0.9$, $\theta_{\min}=0.1$, $\theta_{\max}=0.9$, and $t=10^{-4}$. The largest number of iterations in Algorithm \ref{ap} is set to be $100000$.
The largest number of outer iterations in Algorithm \ref{nm1} is set to be $100$ and the largest number of iterations in the CG method is set to be $n^2$.

For comparison purposes,  we repeat our experiments over $10$ different starting points. In our numerical tests, `{\tt CT.}', {\tt IT.}', `{\tt NF.}', `{\tt NCG.}',  `{\tt Res.}', and  `{\tt grad.}' mean the averaged total computing time in seconds, the averaged number of iterations, the averaged number of function evaluations, the averaged number of inner CG iterations, the averaged residual $\|G(X^k)\|_F$,  $\|H(X^k)\|_F$, or $\|C^k-Y^k\|_F$,  and the averaged residual $\|\grad \phi(X^k)\|$ or $\|\grad \psi(X^k)\|$ at the final iterates of the corresponding algorithms, accordingly.

\begin{example}\label{ex:1}
We consider the NIEP with varying $n$. Let $\widehat{C}$ be a random $n\times n$ nonnegative matrix with each entry generated from
the uniform distribution on the interval $[0, 1]$. We choose the eigenvalues of $\widehat{C}$ as prescribed spectrum.
\end{example}
\begin{example}\label{ex:2}
We consider the NIEP-PE with varying $n$. Let $\widehat{C}$ be a random $n\times n$ nonnegative matrix with each entry generated from
the uniform distribution on the interval $[0, 1]$. We choose the eigenvalues of $\widehat{C}$ as prescribed spectrum.
Also, we choose the index subset $\cl := \big\{ (i,j) \ | \  0.2 \leq  (\widehat{C})_{ij} \leq 0.3, \; i,j=1,\ldots,n \big \}$.
The nonnegative matrix $C_a\in\Rnn$ with prescribed entries is defined by
$(C_a)_{ij} := (\widehat{C})_{ij}$,  if  $(i,j) \in \cl$; $0$, otherwise.
\end{example}

Tables \ref{table1}--\ref{table2} list the numerical results for Examples \ref{ex:1}--\ref{ex:2}, where ``*" means that the largest number of iterations is reached for some stating points.

We observe from Tables \ref{table1}--\ref{table2} that Algorithm \ref{ap} behaviors better than GPRP and/or  RFR in terms of computing time for small $n$ (e.g., $n=10,20$) while GPRP and RFR work much better than Algorithm \ref{ap} in terms of computing time for $n\ge 50$. However, Algorithm \ref{nm1} is the most effective  in terms of computing time.
\begin{table}[ht]\renewcommand{\arraystretch}{1.2} \addtolength{\tabcolsep}{2pt}
  \caption{Numerical results of Example \ref{ex:1}.}\label{table1}
  \begin{center} {\scriptsize
   \begin{tabular}[c]{|c|c|r|c|c|c|l|l|}
     \hline
Alg.         & $n$ & {\tt CT.} & {\tt IT.} & {\tt NF.} &  {\tt NCG.}   &  {\tt Res.}  &  {\tt grad.}  \\  \hline
             & 10  &     0.0346 s  &  $76.1$  &  $79.1$ &         & $8.9\times 10^{-9}$  & $1.8\times 10^{-8}$ \\
             & 20  &     0.0606 s  & $136.7$  & $140.6$ &         & $9.6\times 10^{-9}$  & $3.0\times 10^{-8}$ \\
             & 50  &     0.3731 s  & $352.4$  & $357.7$ &         & $9.7\times 10^{-9}$  & $6.0\times 10^{-8}$  \\
GPRP         & 80  &     1.3377 s  & $625.3$  & $631.3$ &         & $9.8\times 10^{-9}$  & $7.8\times 10^{-8}$  \\
             &100  &     2.4659 s  & $753.7$  & $760.1$ &         & $9.8\times 10^{-9}$  & $9.1\times 10^{-8}$  \\
             &150  &     8.3070 s  &$1225.9$  &$1232.9$ &         & $9.9\times 10^{-9}$  & $1.1\times 10^{-7}$  \\
             &200  &     17.208 s  &$1492.9$  &$1500.9$ &         & $9.9\times 10^{-9}$  & $1.2\times 10^{-7}$  \\
             \cline{2-7}\hline
             & 10  &     0.0805 s  & $119.2$  & $121.8$ &         & $8.8\times 10^{-9}$  & $2.6\times 10^{-8}$ \\
             & 20  &     0.1583 s  & $214.4$  & $217.7$ &         & $9.4\times 10^{-9}$  & $4.5\times 10^{-8}$ \\
             & 50  &     0.3781 s  & $308.6$  & $313.6$ &         & $9.4\times 10^{-9}$  & $7.0\times 10^{-8}$  \\
RFR          & 80  &     1.1010 s  & $485.2$  & $490.9$ &         & $9.6\times 10^{-9}$  & $8.5\times 10^{-8}$  \\
             &100  &     1.7944 s  & $523.0$  & $529.0$ &         & $9.7\times 10^{-9}$  & $1.1\times 10^{-7}$  \\
             &150  &     6.7529 s  & $951.2$  & $958.2$ &         & $9.8\times 10^{-9}$  & $1.1\times 10^{-7}$  \\
             &200  &     14.094 s  &$1163.9$  &$1170.9$ &         & $9.9\times 10^{-9}$  & $1.4\times 10^{-7}$  \\
             \cline{2-7}\hline
             & 10  &     0.0422 s  &  $32.6$  &         &         & $4.4\times 10^{-9}$  &   \\
             & 20  &     0.0434 s  &  $30.9$  &         &         & $4.9\times 10^{-9}$  &   \\
Alg.         & 50  &     2.1249 s  & $388.5$  &         &         & $7.4\times 10^{-9}$  &   \\
\ref{ap}     & 80  &     10.209 s  & $766.0$  &         &         & $5.5\times 10^{-9}$  &   \\
             &100  &     17.053 s  & $913.6$  &         &         & $4.2\times 10^{-9}$  &   \\
             &150  &     132.36 s  &$3292.5$  &         &         & $5.1\times 10^{-9}$  &   \\
             &200  &     1349.5 s  & $19111$  &         &         & $0.1681^*$           &   \\
             \cline{2-7}\hline
             & 10  &     0.0078 s  &  5.0     &   6.0   &   16.5  & $1.2\times 10^{-9}$  & $2.7\times 10^{-9}$ \\
             & 20  &     0.0118 s  &  5.6     &   6.6   &   31.2  & $1.8\times 10^{-9}$  & $1.3\times 10^{-8}$  \\
Alg.         & 50  &     0.0550 s  &  6.0     &   7.0   &   52.5  & $1.8\times 10^{-11}$ & $2.3\times 10^{-10}$ \\
\ref{nm1}    & 80  &     0.1907 s  &  6.6     &   7.6   &   70.3  & $1.0\times 10^{-9}$  & $1.2\times 10^{-8}$ \\
             &100  &     0.3634 s  &  6.8     &   7.8   &   80.6  & $1.2\times 10^{-9}$  & $3.2\times 10^{-8}$ \\
             &150  &     0.9421 s  &  7.0     &   8.0   &   98.6  & $3.9\times 10^{-13}$ & $1.2\times 10^{-11}$ \\
             &200  &     1.7102 s  &  7.0     &   8.0   &  105.3  & $1.8\times 10^{-11}$ & $6.1\times 10^{-10}$ \\
             \cline{2-7}\hline
  \end{tabular} }
  \end{center}
\end{table}
\begin{table}[ht]\renewcommand{\arraystretch}{1.2} \addtolength{\tabcolsep}{2pt}
  \caption{Numerical results of Example \ref{ex:2}.}\label{table2}
  \begin{center} {\scriptsize
   \begin{tabular}[c]{|c|c|r|c|c|c|l|l|}
     \hline
Alg.         & $n$ & {\tt CT.} & {\tt IT.} & {\tt NF.} &  {\tt NCG.}   &  {\tt Res.}  &  {\tt grad.}  \\  \hline
             & 10  &     0.0378 s  & $111.2$  & $114.2$ &         & $8.9\times 10^{-9}$  & $1.7\times 10^{-8}$ \\
             & 20  &     0.0861 s  & $185.6$  & $189.5$ &         & $9.5\times 10^{-9}$  & $2.7\times 10^{-8}$ \\
             & 50  &     0.5068 s  & $514.3$  & $519.9$ &         & $9.8\times 10^{-9}$  & $4.4\times 10^{-8}$  \\
GPRP         & 80  &     1.7614 s  & $883.4$  & $889.9$ &         & $9.9\times 10^{-9}$  & $6.6\times 10^{-8}$  \\
             &100  &     3.5525 s  &$1121.1$  &$1128.1$ &         & $9.9\times 10^{-9}$  & $7.0\times 10^{-8}$  \\
             &150  &     11.720 s  &$1709.4$  &$1717.4$ &         & $9.9\times 10^{-9}$  & $9.0\times 10^{-8}$  \\
             &200  &     24.608 s  &$2199.8$  &$2208.7$ &         & $9.9\times 10^{-9}$  & $1.1\times 10^{-7}$  \\
             \cline{2-7}\hline
             & 10  &     0.1182 s  & $188.9$  & $191.2$ &         & $9.3\times 10^{-9}$  & $2.1\times 10^{-8}$ \\
             & 20  &     0.1966 s  & $286.9$  & $290.3$ &         & $9.7\times 10^{-9}$  & $3.5\times 10^{-8}$ \\
             & 50  &     0.5292 s  & $432.9$  & $438.1$ &         & $9.7\times 10^{-9}$  & $6.0\times 10^{-8}$  \\
RFR         & 80  &     1.3037 s  & $583.2$  & $589.1$ &         & $9.7\times 10^{-9}$  & $6.5\times 10^{-8}$  \\
             &100  &     2.2059 s  & $656.1$  & $662.6$ &         & $9.6\times 10^{-9}$  & $1.0\times 10^{-7}$  \\
             &150  &     6.7271 s  & $972.8$  & $980.4$ &         & $9.8\times 10^{-9}$  & $1.3\times 10^{-7}$  \\
             &200  &     14.965 s  &$1205.4$  &$1213.4$ &         & $9.9\times 10^{-9}$  & $1.1\times 10^{-7}$  \\
             \cline{2-7}\hline
             & 10  &     0.0258 s  & $30.4$   &         &         & $6.7\times 10^{-9}$  &   \\
             & 20  &     0.1692 s  & $106.0$  &         &         & $7.8\times 10^{-9}$  &  \\
Alg.         & 50  &     3.5200 s  & $637.0$  &         &         & $7.5\times 10^{-9}$  &  \\
\ref{ap}     & 80  &     2.2655 s  & $250.1$  &         &         & $7.2\times 10^{-9}$  &  \\
             &100  &     47.628 s  &$3019.6$  &         &         & $7.1\times 10^{-9}$  &   \\
             &150  &     647.07 s  & $14994$  &         &         & $0.1546^*$           &   \\
             &200  &     824.65 s  & $11768$  &         &         & $0.1478^*$           &   \\
             \cline{2-7}\hline
             & 10  &     0.0055 s  &  5.2     &   6.2   &   22.6  & $6.4\times 10^{-10}$ & $1.7\times 10^{-9}$ \\
             & 20  &     0.0149 s  &  6.0     &   7.0   &   40.4  & $7.9\times 10^{-14}$ & $3.2\times 10^{-13}$  \\
Alg.         & 50  &     0.0574 s  &  6.0     &   7.0   &   55.9  & $1.7\times 10^{-9}$  & $2.1\times 10^{-8}$ \\
\ref{nm1}    & 80  &     0.2251 s  &  7.0     &   8.0   &   88.9  & $5.1\times 10^{-14}$ & $4.8\times 10^{-13}$ \\
             &100  &     0.4047 s  &  7.0     &   8.0   &   97.3  & $1.7\times 10^{-13}$ & $2.8\times 10^{-12}$ \\
             &150  &     0.9896 s  &  7.0     &   8.0   &  104.8  & $4.6\times 10^{-11}$ & $1.4\times 10^{-9}$ \\
             &200  &     0.9896 s  &  7.1     &   8.1   &  112.5  & $1.7\times 10^{-9}$  & $7.6\times 10^{-8}$ \\
             \cline{2-7}\hline
  \end{tabular} }
  \end{center}
\end{table}

To further illustrate the efficiency of our algorithm, we report the numerical results for Examples \ref{ex:1}--\ref{ex:2} with various problem sizes.
Tables \ref{table3}--\ref{table4} display the numerical results for Examples \ref{ex:1}--\ref{ex:2}.

We see from Tables \ref{table3}--\ref{table4} that Algorithm \ref{nm1}, GPRP and  RFR  work  for  large problems while Algorithm \ref{nm1} is more efficient than GPRP and  RFR for  large problems.

Finally, we point out that all algorithms converge to different solutions for different starting points.

\begin{table}[ht]\renewcommand{\arraystretch}{1.2} \addtolength{\tabcolsep}{2pt}
  \caption{Numerical results of Example \ref{ex:1}.}\label{table3}
  \begin{center} {\scriptsize
   \begin{tabular}[c]{|c|r|r|c|c|c|c|l|}
     \hline
Alg.         & $n$  & {\tt CT.} & {\tt IT.} & {\tt NF.} &  {\tt NCG.}   &   {\tt Res.}    &  {\tt grad.}  \\  \hline
             & 400  &      04 m 17 s  &  $3074$  &  $3083$ &         & $9.9\times 10^{-9}$  & $9.0\times 10^{-8}$ \\
GPRP         & 600  &      14 m 11 s  &  $3641$  &  $3650$ &         & $9.9\times 10^{-9}$  & $2.5\times 10^{-7}$ \\
             & 800  &      41 m 47 s  &  $5108$  &  $5118$ &         & $9.9\times 10^{-9}$  & $4.2\times 10^{-7}$  \\
             &1000  & 01 h 14 m 23 s  &  $5420$  &  $5430$ &         & $9.9\times 10^{-9}$  & $5.0\times 10^{-7}$  \\
             \cline{2-7}\hline
             & 400  &      04 m 36 s  &  $3324$  &  $3332$ &         & $9.9\times 10^{-9}$  & $1.9\times 10^{-7}$ \\
RFR          & 600  &      24 m 23 s  &  $5840$  &  $5849$ &         & $9.9\times 10^{-9}$  & $1.9\times 10^{-7}$ \\
             & 800  & 01 h 07 m 53 s  &  $8087$  &  $8096$ &         & $9.9\times 10^{-9}$  & $2.5\times 10^{-7}$  \\
             &1000  & 02 h 38 m 59 s  & $11157$  & $11167$ &         & $9.9\times 10^{-9}$  & $2.6\times 10^{-7}$  \\
             \cline{2-7}\hline
             & 400  &         23.7 s  &   8.0    &   9.0   &   166.9   & $3.2\times 10^{-13}$ & $7.1\times 10^{-12}$ \\
Alg.         & 600  &      01 m 05 s  &   8.0    &   9.0   &   169.5   & $3.4\times 10^{-12}$ & $2.7\times 10^{-10}$  \\
\ref{nm1}    & 800  &      02 m 23 s  &   8.0    &   9.0   &   162.4   & $8.3\times 10^{-9}$  & $1.7\times 10^{-6}$ \\
             &1000  &      07 m 09 s  &   9.0    &  10.0   &   229.3   & $1.2\times 10^{-12}$ & $4.2\times 10^{-11}$ \\
             \cline{2-7}\hline
\end{tabular} }
  \end{center}
\end{table}
\begin{table}[ht]\renewcommand{\arraystretch}{1.2} \addtolength{\tabcolsep}{2pt}
  \caption{Numerical results of Example \ref{ex:2}.}\label{table4}
  \begin{center} {\scriptsize
   \begin{tabular}[c]{|c|r|r|c|c|c|c|l|}
     \hline
Alg.         & $n$  & {\tt CT.} & {\tt IT.} & {\tt NF.} &  {\tt NCG.}   &   {\tt Res.}    &  {\tt grad.}  \\  \hline
             & 400  &      06 m 30 s  &  $4805$  &  $4815$ &         & $9.9\times 10^{-9}$  & $1.4\times 10^{-7}$ \\
GPRP         & 600  &      17 m 45 s  &  $4127$  &  $4139$ &         & $9.9\times 10^{-9}$  & $3.8\times 10^{-7}$ \\
             & 800  &      42 m 50 s  &  $5283$  &  $5295$ &         & $9.9\times 10^{-9}$  & $4.4\times 10^{-7}$  \\
             &1000  & 01 h 17 m 15 s  &  $5421$  &  $5434$ &         & $9.9\times 10^{-9}$  & $5.3\times 10^{-7}$  \\
             \cline{2-7}\hline
             & 400  &      04 m 46 s  &  $3820$  &  $3830$ &         & $9.9\times 10^{-9}$  & $1.3\times 10^{-7}$ \\
RFR          & 600  &      25 m 53 s  &  $6243$  &  $6254$ &         & $9.9\times 10^{-9}$  & $1.5\times 10^{-7}$ \\
             & 800  & 01 h 19 m 33 s  &  $9294$  &  $9306$ &         & $9.9\times 10^{-9}$  & $2.1\times 10^{-7}$  \\
             &1000  & 02 h 50 m 14 s  & $11861$  & $11873$ &         & $9.9\times 10^{-9}$  & $2.8\times 10^{-7}$  \\
             \cline{2-7}\hline
             & 400  &         24.0 s  &   8.0    &   9.0   &   183.9   & $6.0\times 10^{-13}$ & $3.2\times 10^{-11}$ \\
Alg.         & 600  &      01 m 02 s  &   8.0    &   9.0   &   171.6   & $1.1\times 10^{-9}$  & $1.5\times 10^{-7}$  \\
\ref{nm1}    & 800  &      03 m 41 s  &   9.0    &  10.0   &   224.4   & $1.4\times 10^{-12}$ & $1.5\times 10^{-10}$ \\
             &1000  &      06 m 22 s  &   9.0    &  10.0   &   224.0   & $8.2\times 10^{-11}$ & $2.3\times 10^{-8}$ \\
             \cline{2-7}\hline
  \end{tabular} }
  \end{center}
\end{table}

\section{Conclusions}\label{sec7}
This paper is concerned with the nonnegative inverse eigenvalue problem. The inverse problem is rewritten as an under-determined constrained nonlinear matrix equation over several matrix manifolds. Then a Riemannian inexact Newton-CG method is proposed for solving the constrained nonlinear matrix equation. The global and quadratic convergence of the proposed geometric method is established under some mild conditions. Our  method is also extended to the case of prescribed entries.  Numerical tests illustrate the efficiency of the proposed geometric algorithm. From our numerical tests, we observe that, for large problems, most  of our computing time is spent on the CG method for solving  (\ref{eq:le}). It would improve the efficiency if one can find a good preconditioner for (\ref{eq:le}), which needs further study.




\vskip 5mm  {\bf Appendix A.} In this appendix, we establish some basic properties of the product manifold $\Rnn\times \co(n)\times \cv$ and the differential of $G$ defined in (\ref{eq:NIEP1}). We first show that  the nonlinear matrix equation (\ref{eq:NIEP1}) is under-determined for all $n\ge 2$. The  dimension of $\Rnn\times \co(n)\times \cv$ is given by
\[
\dim (\Rnn \times \co(n) \times \cv) =  n^2 + \frac{n(n-1)}{2} + |\mathcal{J}|,
\]
where $\mathcal{J}$ is the complementary index set of $\mathcal{I}$ with respect to the index set $\cn$, and $|\mathcal{J}|$ is the cardinality of $\mathcal{J}$.
Thus
\[
\dim (\Rnn \times \co(n) \times \cv) >  \dim \Rnn\quad\mbox{for } n\ge 2.
\]
Hence, (\ref{eq:NIEP1}) is under-determined for all $n\ge 2$.

The tangent space of $\Rnn \times \co(n) \times \cv$ at a point $(S,Q,V)\in\Rnn \times \co(n) \times \cv$ is given by
\[
\begin{array}{c}
T_{(S,Q,V)}\big( \Rnn \times\co(n)\times \cv\big) = T_S \Rnn\times T_Q \co(n)\times T_V \cv.
\end{array}
\]
Here, $T_S \Rnn$, $T_Q \co(n)$, and $T_V \cv$ are the tangent spaces of $\Rnn$, $\co(n)$, and $\cv$ at $S\in\Rnn$, $Q\in\co(n)$, and $V\in\cv$ accordingly, which are
given by \cite[p.42]{AMS08}:
\[
T_S \Rnn =\Rnn,\quad
T_Q \co(n)=  \big\{ Q\Omega \ | \  \Omega^T = -\Omega ,\; \Omega \in \Rnn \big\}, \quad
T_V \cv = \cv.
\]

A retraction $R$ on $\Rnn\times\co(n)\times \cv$ is given by
\[
R_{(S,Q,V)} (\xi_S,\zeta_Q,\eta_V) = \big(R_S(\xi_S), R_Q(\zeta_Q),R_{V}(\eta_V)\big)
\]
for all  $(S,Q,V) \in \Rnn \times \co(n) \times \cv$ and $(\xi_S,\eta_Q,\gamma_V)\in T_{(S,Q,V)}\big( \Rnn\times\co(n)\times \cv\big)$, where $R_S$, $R_Q$, and $R_{V}$ are the retractions on $\Rnn$, $\co(n)$, and $\cv$  accordingly,  which may take the following form:
\[
\left\{
\begin{array}{ccl}
R_S(\xi_S) &=& S + \xi_S,  \quad {\rm for} \; \xi_S \in  T_S\Rnn,\\[1.5mm]
R_Q(\zeta_Q) &=& {\rm qf} (Q + \zeta_Q),  \quad {\rm for} \; \zeta_Q \in  T_Q\co(n),\\[1.5mm]
R_{V}(\eta_V) &=& V+\eta_V, \quad {\rm for} \; \eta_V \in T_{V} \cv.
\end{array}
\right.
\]
Here, ${\rm qf}(A)$ means the $Q$ factor of the QR decomposition of a nonsingular matrix $A\in \Rnn$ in the form of $A=Q\widetilde{R}$ with $Q\in\co(n)$ and $\widetilde{R}$ being an upper triangular matrix with strictly positive diagonal entries. For other choices of retractions on  $\co(n)$, one may refer to \cite[pp.58--59]{AMS08}.

We now establish the differential of $G$. By simple calculation, the differential $\mathrm{D}G(S,Q,V): T_{(S,Q,V)}\big(\Rnn \times \co(n) \times \mathcal{V}\big) \to T_{G(S,Q,V)}\Rnn\simeq\Rnn$ of $G$ at $(S,Q,V) \in \Rnn \times \co(n) \times \cv$  is determined by
\[
\mathrm{D}G(S,Q,V) [(\Delta S,\Delta Q, \Delta V)] = 2S\odot \Delta S + [ Q(\Lambda+V)Q^T , \Delta QQ^T]- Q\Delta VQ^T
\]
for all $(\Delta S,\Delta Q, \Delta V) \in T_{(S,Q,V)}(\Rnn \times \co(n) \times \mathcal{V})$.
On the other hand, with respect to the Riemannian metric $\langle\cdot,\cdot\rangle$,  the adjoint $(\mathrm{D}G(S,Q,V))^* : T_{G(S,Q,V)}\Rnn \to T_{(S,Q,V)}(\Rnn \times\co(n)\times \mathcal{V})$ of
$\mathrm{D}G(S,Q,V)$ is determined by
\[
(\mathrm{D}G(S,Q,V))^* [ \Delta Z ]
= ((\mathrm{D}G(S,Q,V))_1^* [ \Delta Z ],(\mathrm{D}G(S,Q,V))_2^* [ \Delta Z ],(\mathrm{D}G(S,Q,V))_3^* [ \Delta Z ])
\]
for all $\Delta Z\in T_{G(S,Q,V)}\Rnn$ and for each $\Delta Z\in T_{G(S,Q,V)}\Rnn$,
\[
\left\{
\begin{array}{rcl}
(\mathrm{D}G(S,Q,V))_1^* [ \Delta Z ] &=& 2S\odot \Delta Z, \\[2mm]
(\mathrm{D}G(S,Q,V))_2^* [ \Delta Z ] &=& \displaystyle \frac{1}{2}\big( [ Q(\Lambda + V)Q^T, (\Delta Z)^T ]
+ [ Q(\Lambda + V)^TQ^T, \Delta Z ]\big)Q, \\[2mm]
(\mathrm{D}G(S,Q,V))_3^* [ \Delta Z ] &=& -W\odot\big(Q^T\Delta Z Q\big),
\end{array}
\right.
\]
where $W\in \Rnn$ is defined by $W_{ij} =0$,  if $ (i,j) \in \mathcal{I}$; $1$,  otherwise.

\vskip 5mm  {\bf Appendix B.}  In this appendix, we establish some basic properties of the product manifold $\cz\times \co(n)\times \cv$  and the differential of $H$ defined in (\ref{NIEP-PE}). First, the tangent space of $\cz\times \co(n)\times \cv$ at a point $(S,Q,V)\in\cz\times \co(n)\times \cv$ is given by
\[
T_{(S,Q,V)}(\cz\times \co(n)\times \cv) = T_S\cz\times T_Q\co(n)\times T_V\cv,
\]
where $T_S\cz=\cz$ and  $T_Q\co(n)$ and $T_V\cv$ are defined as in Appendix A.

A retraction $R$ on $\cz\times\co(n)\times \cv$ takes the form of
\[
R_{(S,Q,V)} (\xi_S,\zeta_Q,\eta_V) = \big(R_S(\xi_S), R_Q(\zeta_Q),R_{V}(\eta_V)\big)
\]
for all $(\xi_S,\eta_Q,\gamma_V)\in T_{(S,Q,V)}\big( \cz\times\co(n)\times \cv\big)$, where $R_S(\xi_S)=S+\xi_S$ for $\xi_S\in T_S\cz$ and $R_Q(\zeta_Q)$ and $R_{V}(\eta_V)$ are defined as in Appendix A.

Next, we establish the differential of  $H$. By simple calculation, the differential $\mathrm{D}H(S,Q,V):$ $T_{(S,Q,V)}\big(\cz\times\co(n)\times \cv\big)$ $\to T_{H(S,Q,V)}\Rnn$ $\simeq\Rnn$ of $H$ at a point $(S,Q,V) \in \cz\times \co(n)\times \cv$ is determined by
\[
\begin{array}{lcl}
\mathrm{D}H(S,Q,V) [(\Delta S, \Delta Q, \Delta V)]
= 2S\odot \Delta S + [ Q(\Lambda+V)Q^T , \Delta QQ^T]- Q\Delta VQ^T
\end{array}
\]
for all $(\Delta S,\Delta Q, \Delta V) \in T_{(S,Q,V)}(\cz\times \co(n)\times \cv)$. Let  $\cz\times \co(n)\times \cv$ be equipped with the Riemannian metric defined as in (\ref{def:ip}). Then the adjoint $(\mathrm{D}H(S,Q,V))^*:$\\ $T_{H(S,Q,V)}\Rnn\to T_{(S,Q,V)}\big(\cs\times \co(n)\times \cv\big)$ is given by
\[
(\mathrm{D}H(S,Q,V))^* [ \Delta Z ]
:=((\mathrm{D}H_1(S,Q,V))^* [ \Delta Z ],(\mathrm{D}H_2(S,Q,V))^* [ \Delta Z ],(\mathrm{D}H_3(S,Q,V))^* [ \Delta Z ])
\]
for all $\Delta Z \in T_{\Phi(S,Q,V)}\Rnn\simeq\Rnn$, where for each $\Delta Z \in T_{\Phi(S,Q,V)}\Rnn$,
\[
\left\{
\begin{array}{lcl}
(\mathrm{D}H_1(S,Q,V))^* [ \Delta Z ] &=& 2S\odot \Delta Z, \\[2mm]
(\mathrm{D}H_2(S,Q,V))^* [ \Delta Z ] &=&\displaystyle  \frac{1}{2}\big([ Q(\Lambda + V)Q^T, (\Delta Z)^T ] + [ Q(\Lambda + V)^TQ^T, \Delta Z ]\big)Q, \\[2mm]
(\mathrm{D}H_3(S,Q,V))^* [ \Delta Z ] &=& -W\odot\big(Q^T\Delta Z Q\big).
\end{array}
\right.
\]


\begin{thebibliography}{99}
\bibitem{AMS08}\textsc{P.-A. Absil, R. Mahony, and R. Sepulchre},
{\em Optimization Algorithms on Matrix Manifolds},
Princeton University Press, Princeton, NJ, 2008.

\bibitem{B08}\textsc{C. G. Baker},  {\em Riemannian Manifold Trust-Region Methods with Applications to Eigenproblems},  Ph.D. thesis, School of Computational Science, Florida State University, Tallahassee, FL, 2008.

\bibitem{BR97}\textsc{R. B. Bapat and T. E. S. Raghavan},
{\em Nonnegative Matrices and Applications},
Cambridge University Press, Cambridge, UK, 1997.

\bibitem{BJ84}\textsc{W. W. Barrett  and C. R. Johnson},
{\em Possible spectra of totally positive matrices},
Linear Algebra Appl., 62 (1984), pp. 231--233.

\bibitem{BP79}\textsc{A. Berman and R. J. Plemmons},
{\em Nonnegative Matrices in the Mathematical Sciences},
Academic Press, New York, 1979.

\bibitem{Bernstein}\textsc{D. Bernstein},
{\em Matrix Mathematics -- Theory, Facts, and Formulas}, 2nd ed.,
Princeton University Press, Princeton, NJ, 2009.

\bibitem{BH91}\textsc{M. Boyle and D. Handelman},
{\em The spectra of nonnegative matrices via symbolic dynamics},
Ann. Math., 133 (1991), pp. 249--316.

\bibitem{CL11}\textsc{X. Chen and D. L. Liu},
{\em Isospectral flow method for nonnegative inverse eigenvalue problem with prescribed structure},
J. Comput. Appl. Math., 235 (2011), pp. 3990--4002.

\bibitem{CDS04}\textsc{M. T. Chu, F. Diele, and I. Sgura},
{\em Gradient flow method for matrix completion with prescribed eigenvalues},
Linear Algebra Appl., 379 (2004), pp. 85--112.

\bibitem{CD91}\textsc{M. T. Chu and K. R. Driessel},
{\em Constructing symmetric nonnegative matrices with prescribed eigenvalues by differential equations},
SIAM J. Math. Anal., 22 (1991), pp. 1372--1387.

\bibitem {CG02}\textsc{M. T. Chu and G. H. Golub},
{\em Structured inverse eigenvalue problems},
Acta Numer., 11 (2002), pp. 1--71.

\bibitem{CG05}\textsc{M. T. Chu and G. H. Golub},
{\em Inverse Eigenvalue Problems: Theory, Algorithms, and Applications},
Oxford University Press, Oxford, UK, 2005.

\bibitem{CG98}\textsc{M. T. Chu and Q. Guo},
{\em A numerical method for the inverse stochastic spectrum problem},
SIAM J. Matrix Anal. Appl., 19 (1998), pp. 1027--1039.

\bibitem{DPM03}\textsc{J. P. Dedieu, P. Priouret, and G. Malajovich}, {\em  Newton's method on Riemannian manifolds: covariant alpha theory},
 IMA J. Numer. Anal., 23 (2003), pp. 395--419.

\bibitem{DS96}\textsc{J. E. Dennis and R. B. Schnabel},
{\em Numerical Methods for Unconstrained Optimization and Nonlinear Equations},
SIAM, Philadelphia, 1996.

\bibitem{ELN04}\textsc{P. D. Egleston, T. D. Lenker, and S. K. Narayan},
{\em The nonnegative inverse eigenvalue problem},
Linear Algebra Appl., 379 (2004), pp. 475--490.

\bibitem{FM79}\textsc{S. Friedland and A. A. Melkman},
{\em On the eigenvalues of nonnegative Jacobi matrices},
Linear Algebra Appl., 25 (1979), pp. 239--254.

\bibitem{GV13}\textsc{G. H. Golub and C. F. Van Loan},
{\em Matrix Computations}, 4th ed.,
 Johns Hopkins University Press, Baltimore, 2013.

\bibitem{K51}\textsc{F. I. Karpelevi\u{c}},
{\em On the characteristic roots of matrices with nonnegative elements},
Izv. Akad. Nauk SSSR Ser. Mat. 15 (1951), pp. 361--383 (in Russian).

\bibitem{LS06}\textsc{T. J. Laffey and H. \v{S}migoc},
{\em Nonnegative realization of spectra having negative real parts},
Linear Algebra Appl., 416 (2006), pp. 148--159.


\bibitem{L13}\textsc{M. M. Lin},
{\em  Fast recursive algorithm for constructing nonnegative matrices with prescribed real eigenvalues},
Appl. Math. Comput., 256 (2015), pp. 582--590.

\bibitem{LL78}\textsc{R. Loewy and D. London},
{\em  A note on an inverse problems for nonnegative matrices},
Linear and Multilinear Alg., 6 (1978), pp. 83--90.

\bibitem{L69}\textsc{D. G. Luenberger},
{\em Optimization by Vector Space Methods},
John Wiley \& Sons, New York, 1969.

\bibitem{M88}\textsc{H. Minc},
{\em Nonnegative Matrices},
John Wiley \& Sons, New York, 1988.

\bibitem{O83}\textsc{G. N.  de Oliveira},
{\em Nonnegative matrices with prescribed spectrum},
Linear Algebra Appl., 54 (1983), pp. 117--121.

\bibitem{O06}\textsc{R. Orsi},
{\em Numerical methods for solving inverse eigenvalue problems for nonnegative matrices},
SIAM J. Matrix Anal. Appl., 28 (2006), pp. 190--212.

\bibitem{P53}\textsc{H. Perfect},
{\em Methods of constructing certain stochastic matrices},
Duke Math. J., 20 (1953), pp. 395--404.

\bibitem{P55}\textsc{H. Perfect},
{\em Methods of constructing certain stochastic matrices. II},
Duke Math. J., 22 (1955), pp. 305--311.

\bibitem{R96}\textsc{R. Reams},
{\em An inequality for nonnegative matrices and the inverse eigenvalue problem},
Linear and Multilinear Alg., 41 (1996), pp. 367--375.

\bibitem{SA06}\textsc{E. Senata},
{\em Non-negative Matrices and Markov Chains},
2nd rev. ed., Springer-Verlag, New York, 2006.

\bibitem{S03}\textsc{R. L. Soto},
{\em Existence and construction of nonnegative matrices with prescribed spectrum},
Linear Algebra Appl., 369 (2003), pp. 169--184.

\bibitem{S06-2}\textsc{R.L. Soto},
{\em Realizability criterion for the symmetric nonnegative inverse eigenvalue problem},
Linear Algebra Appl., 416 (2006), pp. 783--794.

\bibitem{S13}\textsc{R.L. Soto},
{\em A family of realizability criteria for the real and symmetric nonnegative inverse eigenvalue problem},
Numer. Linear Algebra Appl., 20 (2013), pp. 336--348.

\bibitem{S83}\textsc{G. W. Soules},
{\em  Constructing symmetric nonnegative matrices},
Linear and Multilinear Alg., 13 (1983), pp. 241--251.

\bibitem{S49}\textsc{H. R. Sule\u{i}manova},
{\em Stochastic matrices with real characteristic numbers},
Doklady Akad. Nauk SSSR (NS), 66 (1949), pp. 343--345.

\bibitem{S06}\textsc{J. P. Simons},
{\em Inexact Newton methods applied to under-determined systems},
PhD thesis. Department of Mathematical Science, Worcester Polytechnic Institute, 2006.

\bibitem{S94}\textsc{S. T. Smith}, {\em Optimization techniques on Riemannian manifolds}, Fields Inst. Commun., 3 (1994), pp. 113--136.

\bibitem{WW90}\textsc{H. F. Walker and L. T. Watson},
{\em Least-change secant update methods for under-determined systems},
SIAM J. Numer. Anal., 27 (1990), pp. 1227--1262.

\bibitem{X98}\textsc{S. F. Xu},
{\em An Introduction to Inverse Algebraic Eigenvalue Problems},
Beijing; Friedr. Vieweg \& Sohn, Braunschweig, 1998.

\bibitem{YBZC16}\textsc{T. T. Yao, Z. J. Bai, Z. Zhao, and W. K. Ching},
{\em A Riemannian Fletcher--Reeves conjugate gradient method for doubly
stochastic inverse eigenvalue problems},
SIAM J. Matrix Anal. Appl., 37 (2016), pp. 215--234,

\bibitem{ZBJ14}\textsc{Z. Zhao, Z. J. Bai, and X. Q. Jin},
{\em A Riemannian Newton algorithm for nonlinear eigenvalue problems},
SIAM J. Matrix Anal. Appl., 36 (2015), 752--774.

\bibitem{ZJB16}\textsc{Z. Zhao, X. Q. Jin, and Z. J. Bai},
{\em A geometric nonlinear conjugate gradient method for
stochastic inverse eigenvalue problems},
SIAM J. Numer. Anal., 54 (2016), 2015--2035.

\end{thebibliography}
\end{document}